\newtheorem{theorem}{Theorem}[section]
\newtheorem{lemma}[theorem]{Lemma}
\newtheorem{proposition}[theorem]{Proposition}
\newtheorem{corollary}[theorem]{Corollary}
\newtheorem{addendum}[theorem]{Addendum}
\theoremstyle{definition}
\newtheorem{definition}[theorem]{Definition}
\newtheorem{remark}[theorem]{Remark}
\newtheorem{method}[theorem]{Method}
\newtheorem{algorithm}[theorem]{Algorithm}
\theoremstyle{remark}
\def\TT{\mathbb{T}}
\def\KK{\mathbb{K}}
\def\ZZ{\mathbb{Z}}
\def\PP{\mathbb{P}}
\def\QQ{\mathbb{Q}}
\def\<{\langle}
\def\>{\rangle}
\def\cox{\mathcal{R}}
\newcommand{\thickhline}{%
    \noalign {\ifnum 0=`}\fi \hrule height 1pt
    \futurelet \reserved@a \@xhline
}
\renewcommand{\phi}{\varphi}
\def\div{{\rm div}}
\def\bangle#1{\langle #1 \rangle}
\def\KK{{\mathbb K}}
\def\TT{{\mathbb T}}
\def\ZZ{{\mathbb Z}}
\def\QQ{{\mathbb Q}}
\def\PP{{\mathbb P}}
\def\Cox{\cox}
\def\Eff{{\rm Eff}}
\def\Mov{{\rm Mov}}
\def\Ample{{\rm Ample}}
\def\SAmple{{\rm SAmple}}
\def\Nef{{\rm Nef}}
\def\Aut{\operatorname{Aut}}
\def\Cl{\operatorname{Cl}}
\def\Aut{{\rm Aut}}
\def\Eff{{\rm Eff}}
  \newcommand{\miniscule}{\@setfontsize\miniscule{3}{5}}% \tiny: 5/6
\author[J.~Hausen, S.~Keicher, A.~Laface]{J\"urgen~Hausen, Simon~Keicher and Antonio~Laface}
 \address{Mathematisches Institut, Universit\"at T\"ubingen,
Auf der Morgenstelle 10, 72076 T\"ubingen, Germany}
\email{juergen.hausen@uni-tuebingen.de}
\address{Departamento de Matem\'atica, Universidad de Concepci\'on,
Casilla 160-C, Concepci\'on, Chile}
\email{keicher@mail.mathematik.uni-tuebingen.de}
\address{Departamento de Matem\'atica, Universidad de Concepci\'on,
Casilla 160-C, Concepci\'on, Chile}
\email{alaface@udec.cl}
\title[On blowing up the weighted projective plane]%
{On blowing up the weighted projective plane} 
\thanks{The second author was supported by proyecto 
FONDECYT postdoctorado N.~3160016.
The third author was supported by proyecto 
FONDECYT regular N.~1150732 and 
grant Anillo CONICYT PIA ACT 1415.}
\begin{document}

\begin{abstract}
We investigate the blow-up of a weighted projective plane
at a general point. 
We provide criteria and algorithms for testing if 
the result is a Mori dream surface and we compute the 
Cox ring in several cases.
Moreover applications to the study of $\overline{M}_{0,n}$
are discussed.
\end{abstract}

\maketitle 

\section{Introduction}

Let $a,b,c$ be pairwise coprime positive integers
and denote by $\PP(a,b,c)$ the associated weighted 
projective plane, defined over an algebraically 
closed field $\KK$ of characteristic zero. 
We consider the blow-up 
$$
\pi \colon X(a,b,c) \ \to \ \PP(a,b,c)
$$
at the point $[1,1,1] \in \PP(a,b,c)$ and ask whether 
$X = X(a,b,c)$ 
is a Mori dream surface, i.e., has finitely 
generated Cox ring
$$ 
\mathcal{R}(X)
\ = \
\bigoplus_{\Cl(X)} \Gamma(X,\mathcal{O}(D)).
$$
This problem has been studied  by several authors 
and the results have been used  
to prove that $\overline{M}_{0,n}$ is not a 
Mori dream space for $n \ge 13$, 
see~\cite{GoNiWa,CaTe,gk}.
In fact, as we will see below,  $\overline{M}_{0,n}$
is not even a Mori dream space for $n \ge 10$.

However, it still remain widely open questions, 
which of the $X(a,b,c)$ are Mori dream surfaces, 
and, if so, how does their Cox ring look like.
We provide new results and computational tools.
Our approach goes through the description of the Cox 
ring of $X = X(a,b,c)$ as a saturated Rees algebra:
$$ 
\mathcal{R}(X)
\ = \ 
S[I]^{\rm sat} 
\ := \ 
\bigoplus_{\mu \in \ZZ} (I^{-\mu} : J^\infty)t^\mu,
$$ 
where $S$ is the Cox ring of $\PP(a,b,c)$ 
and $I,J \subseteq S$ are the 
weighted homogeneous ideals of  
the points $(1,1,1)$ and $(0,0,0)$ respectively;
see~\cite[Prop.~5.2]{hkl}.
We say that an element 
of the Cox ring $\mathcal{R}(X)$ 
is of \emph{Rees multiplicity} $\mu$
if it belongs to the component $(I^{\mu} : J^\infty)t^{-\mu}$.

Our theoretical results concern the cases 
that the Cox ring of $X$ is generated
by elements of low Rees multiplicity.
We characterize this situation in terms 
of $a,b,c$ and we provide generators 
and relations for the Cox ring of $X$, 
where we list the degree of a generator $T_i$ 
in $\Cl(X) = \ZZ^2$ as the $i$-th 
column of the degree matrix~$Q$.

\begin{theorem}
\label{isKstar}
Let $X = X(a,b,c)$ be as before.
Then the following statements are equivalent.
\begin{enumerate}
\item
The surface $X$ admits a nontrivial 
$\KK^*$-action.
\item
One of the integers $a,b,c$ lies in the monoid
generated by the other two.
\item
The Cox ring of $X$ is generated by homogeneous 
elements of Rees multiplicity at most one.
\end{enumerate}

\goodbreak

\noindent
If one of these conditions holds, then 
$X$ is a Mori dream surface.
Moreover, if $a$ lies in the monoid 
generated by $b$ and $c$, then the Cox ring 
of $X$ is given by
$$
\mathcal{R}(X)
\ = \ 
\KK[T_1, \ldots, T_5] / \bangle{T_4T_5 - T_1^c + T_2^b},
\qquad
Q
\ = \ 
\left[
\mbox{\footnotesize $
\begin{array}{rrrrr}
b & c &  a & bc & 0 
\\
0 & 0 & -1 & -1 & 1 
\end{array}
$}
\right]
$$
and the Rees multiplicities of the generators 
$T_1, \ldots, T_5$ are $0,0,1,1,-1$ respectively.
In particular, $X$ is a toric surface if and only
if at least one of the three integers $a,b,c$ equals one.
\end{theorem}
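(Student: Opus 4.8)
My plan is to compute the saturated Rees algebra $\mathcal{R}(X)=\bigoplus_{\mu}(I^{-\mu}:J^\infty)t^\mu$ explicitly and to read off every assertion from this single computation, the whole analysis being governed by the ideal $I$ of the monomial curve $C=\{(\lambda^a,\lambda^b,\lambda^c)\}\subseteq\Spec S$. I write $S=\KK[u,v,w]$ with $\deg u=a$, $\deg v=b$, $\deg w=c$, so that $I$ is the prime binomial ideal spanned by the differences of monomials of equal weighted degree, and $J=\langle u,v,w\rangle$. Two reductions organize everything. First, since $I$ is prime with $J\not\subseteq I$, one gets $(I:J^\infty)=I$, so the Rees multiplicity one part of $\mathcal{R}(X)$ is $It^{-1}$. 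Second, the subalgebra generated by all elements of Rees multiplicity at most one is $S[t,It^{-1}]$, whose multiplicity $\mu$ component is $I^\mu t^{-\mu}$ for $\mu\ge 0$ and $St^{-\mu}$ for $\mu<0$; the negative parts already agree with $\mathcal{R}(X)$, so condition (iii) holds if and only if $I^\mu=(I^\mu:J^\infty)$ for every $\mu\ge 1$, i.e.\ every power of $I$ is $J$-saturated.

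The core step is to derive the displayed Cox ring from (ii). Assuming $a=\alpha b+\beta c$ with $\alpha,\beta\ge 0$, I would first show $I=\langle f_3,f_4\rangle$ with $f_3:=u-v^\alpha w^\beta$ and $f_4:=v^c-w^b$, by eliminating $u$: modulo $f_3$ one has $S/\langle f_3\rangle\cong\KK[v,w]$, under which $I$ maps to the ideal of $(\lambda^b,\lambda^c)$, which is the principal ideal $\langle v^c-w^b\rangle$ because $\gcd(b,c)=1$. Thus $I$ is a complete intersection, so $S/I^\mu$ is Cohen--Macaulay and unmixed with unique associated prime $I$; as $V(I)=C\not\subseteq V(J)$, saturation by $J$ is trivial and all powers of $I$ are saturated, giving (iii). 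By the second reduction $\mathcal{R}(X)=S[t,It^{-1}]$, and sending $T_1,\dots,T_5$ to $v,\ w,\ f_3t^{-1},\ f_4t^{-1},\ t$ yields a graded surjection $\psi\colon\KK[T_1,\dots,T_5]/\langle T_4T_5-T_1^c+T_2^b\rangle\to\mathcal{R}(X)$: the trinomial maps to $(f_4t^{-1})t-v^c+w^b=0$, and $u=\psi(T_3T_5+T_1^\alpha T_2^\beta)$ recovers $S$. Since the trinomial is irreducible, the source is a domain of dimension $4$; as $\mathcal{R}(X)$ is a domain of the same dimension $4=\dim X+\rank\Cl(X)$, a surjection of $4$-dimensional domains out of a domain is injective, so $\psi$ is an isomorphism, and tracking the $\ZZ^2$-degrees produces exactly $Q$.

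From the formula the remainder of the ``if'' part follows at once. Finite generation makes $X$ a Mori dream surface by Hu--Keel; the multiplicities $0,0,1,1,-1$ are visible, giving (iii); and the trinomial is homogeneous for the extra $\ZZ$-grading $w_*$ with $w_*(T_1)=b$, $w_*(T_2)=c$, $w_*(T_4)=bc$, $w_*(T_5)=0$ and $w_*(T_3)\ne a$, which a direct check shows is not a $\QQ$-combination of the rows of $Q$; hence $w_*$ defines a one-parameter subgroup of the torus on $\Spec\mathcal{R}(X)$ commuting with the characteristic quasitorus and descending to a nontrivial $\KK^*$-action, giving (i). This proves (ii)$\Rightarrow$(i),(iii). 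Finally $X$ is toric exactly when $\mathcal{R}(X)$ is a polynomial ring, i.e.\ when the trinomial eliminates a variable, which happens iff one exponent equals $1$; after the relabeling permitted by the symmetry of (ii) this is the condition that one of $a,b,c$ equals $1$.

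It remains to close the cycle. For (i)$\Rightarrow$(ii): blowing up the single point $p=[1,1,1]$ identifies $\Aut^0(X)$ with the stabilizer $\Aut^0(\PP(a,b,c))_p$; since $p$ lies in the open $\TT$-orbit with trivial $\TT$-stabilizer, a nontrivial $\KK^*$ in this stabilizer cannot lie in $\TT$, so $\dim\Aut^0(\PP(a,b,c))>2$, which for a complete toric surface forces a Demazure root of the fan, equivalent to (ii). For (iii)$\Rightarrow$(ii) I use the first paragraph: (iii) says all powers of $I$ are saturated; if (ii) fails then $C$ is not a complete intersection, $I$ needs three minimal generators, and I must show that some power $I^\mu$ acquires $J$ as an embedded associated prime, so $(I^\mu:J^\infty)\ne I^\mu$ and (iii) fails. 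I expect this last step---passing from the non-complete-intersection property of the monomial space curve to the failure of saturation of a specific power---to be the main obstacle, requiring the Hilbert--Burch resolution of $I$ (equivalently the complexity-one $T$-surface structure) rather than the elementary elimination available in the complete intersection case.
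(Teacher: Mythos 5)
Your reductions are correct as far as they go, and the parts you complete are proved by routes genuinely different from the paper's: for (ii)$\Rightarrow$(iii) and the Cox ring formula you compute the saturated Rees algebra directly, using that $I=\langle u-v^\alpha w^\beta,\,v^c-w^b\rangle$ is a complete intersection (so all powers are unmixed and $J$-saturation is trivial) together with a surjection-of-equal-dimensional-domains argument, whereas the paper realizes $\PP(a,b,c)$ as the hypersurface $V(T_4-T_1^c+T_2^b)\subseteq\PP(b,c,a,bc)$ and gets $\mathcal{R}(X)$ by a toric ambient modification; your (ii)$\Rightarrow$(i) via an extra grading of $\mathcal{R}(X)$ replaces the paper's explicit automorphism moving $[1,1,1]$ to $[0,1,1]$; and your (i)$\Rightarrow$(ii) is essentially the paper's (reduce to a $\KK^*$ in $\Aut(\PP(a,b,c))$ fixing a point of the open orbit, then invoke root subgroups).

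The genuine gap is (iii)$\Rightarrow$(ii), and you flag it yourself. Your reformulation is right: (iii) is equivalent to $I^\mu=(I^\mu:J^\infty)$ for all $\mu\ge 1$, i.e.\ to all ordinary powers of $I$ being symbolic. But the step you defer --- if no weight lies in the monoid generated by the other two, so that $I$ is a three-generated height-two prime and not a complete intersection, then some power of $I$ acquires $J$ as an embedded prime --- is exactly the hard content of this implication, and it does not follow formally from the number of generators or from writing down the Hilbert--Burch resolution; one needs a theorem of Cowsik--Nori type (for a dimension-one prime $P$ in a regular local ring, $P^{(n)}=P^n$ for all $n$ forces $P$ to be a complete intersection), combined with Herzog's classification of monomial space curves to translate ``complete intersection'' into condition (ii). You neither prove nor cite such a result, so the cycle of equivalences is not closed. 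Note that the paper avoids this piece of commutative algebra altogether: its proof of (iii)$\Rightarrow$(ii) runs through the orthogonal-pair machinery --- (iii) makes $X$ a Mori dream surface, the two extremal prime generators occur in every generating system of $\mathcal{R}(X)$ and hence by (iii) have Rees multiplicity one (Proposition~\ref{prop:mdscriterion2} and Remark~\ref{rem:orthpairdegs}), such an orthogonal pair can be replaced by one consisting of binomials (Lemma~\ref{lem:orthpairdeg1}), and an arithmetic analysis of those binomials yields (ii) (Lemma~\ref{reesdeg2}). As it stands, your proposal establishes (i)$\Leftrightarrow$(ii), (ii)$\Rightarrow$(iii), the Mori dream property and the Cox ring presentation, but not (iii)$\Rightarrow$(ii); to finish you must either import the symbolic-power theorem or argue along the paper's lines.
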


The first step beyond $\KK^*$-surfaces means 
generation of the Cox ring in Rees multiplicity 
at most two.
Our result yields in particular that in this 
case only one generator of Rees multiplicity two 
is needed.

\begin{theorem}
\label{1112}
Assume that none of $a,b,c$ is contained in the 
monoid generated by the remaining two.
Then, for $X = X(a,b,c)$, the following statements
are equivalent.
\begin{enumerate}
\item
The Cox ring of $X$ is generated
by elements of Rees multiplicity at most two.
\item
After suitably reordering $a,b,c$, one has 
$2a = nb + mc$ with positive integers $n,m$ 
such that $b \geq 3m$ and $c\geq 3n$.
\end{enumerate}
Moreover, if one of these conditions holds, 
then $X$ is a Mori dream surface 
and its Cox ring is given by 
$$
\begin{array}{rcl}
\mathcal{R}(X)
& = &
\KK[x,y,z,s_1,\ldots,s_4,t]/(I_2:t^\infty),
\\[2ex]
Q 
& = &
\left[\arraycolsep=3.2pt
    \mbox{\footnotesize$
    \begin{array}{cccccccc}
     a & b & c & 2a & \tfrac{b(c+n)}{2} & \tfrac{c(b+m)}{2} & bc & 0\\
     0 & 0 & 0 & -1 & -1 & -1 & -2 & 1\\
    \end{array}
    $}
  \right],
\end{array}
$$
where the Rees multiplicities of the generators 
$x,y,z,s_1,\ldots,s_4,t$ are $0,0,0,1,1,1,2,-1$,
respectively,
and the ideal $I_2 \subseteq \KK[x,y,z,s_1,\ldots,s_4,t]$ 
is generated by the polynomials
$$
\begin{array}{c}
x^2 - y^nz^m - s_1t, 
\qquad
xz^\frac{b-m}{2}-y^\frac{c+n}{2} - s_2t, 
\qquad
xy^\frac{c-n}{2}-z^\frac{b+m}{2}- s_3t, 
\\[1ex]
xy^\frac{c-3n}{2}z^\frac{b-3m}{2}s_1-y^\frac{c-n}{2}s_2-z^\frac{b-m}{2}s_3-s_4t,
\qquad
y^\frac{c-3n}{2}z^\frac{b-3m}{2}s_1^2 - s_2s_3-xs_4,
\\[1ex]
y^\frac{c-n}{2}s_1-z^ms_2-xs_3,
\qquad
z^\frac{b-m}{2}s_1-xs_2-y^ns_3,
\\[1ex]
s_3^2 + y^\frac{c-3n}{2}s_1s_2-z^ms_4,
\qquad
s_2^2 + z^\frac{b-3m}{2}s_1s_3-ys_4.
\end{array}
$$
\end{theorem}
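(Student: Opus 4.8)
The plan is to exploit the Rees-algebra description $\mathcal{R}(X) = S[I]^{\rm sat}$ from the introduction, where $S = \KK[T_1,T_2,T_3]$ is the Cox ring of $\PP(a,b,c)$ with $\deg(T_i) = a,b,c$, and $I = \langle T_1^? - T_2^?, \ldots\rangle$ is the ideal of the point $[1,1,1]$, generated by the binomials that cut out this point. The condition $2a = nb + mc$ with $b \geq 3m$, $c \geq 3n$ is precisely what is needed for certain products of these binomials to produce a genuinely new generator in Rees multiplicity two. My first step is to identify, in each Rees degree $\mu = 0, 1, 2$, a minimal set of weighted-homogeneous generators of the saturated powers $(I^\mu : J^\infty)$. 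In degree $0$ these are the generators $x,y,z$ of $S$ itself together with $t$ in degree $-1$; in degree $1$ one expects the three natural binomial generators $s_1, s_2, s_3$ of $I$ (normalized so their $\Cl(X)$-degrees match the stated matrix $Q$); and in degree $2$ the essential point is to show that, apart from products of lower-multiplicity generators, exactly one new element $s_4$ is required.

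The heart of the argument is the equivalence (i) $\Leftrightarrow$ (ii). For the direction (ii) $\Rightarrow$ (i), I would explicitly construct the generators from the numerical data: the exponents $\tfrac{b-m}{2}, \tfrac{c+n}{2}, \tfrac{c-3n}{2}, \tfrac{b-3m}{2}$ appearing in $I_2$ are all nonnegative integers precisely because of the hypotheses $b \geq 3m$, $c \geq 3n$ together with $2a = nb + mc$ (which forces the correct parities once $a,b,c$ are pairwise coprime). I would then verify that these nine binomials lie in the appropriate saturated Rees components by a direct degree check using $Q$, and argue that they generate: the key structural input is that $X$ is a surface with $\Cl(X) = \ZZ^2$, so the effective cone is two-dimensional, and one can bound the generators degree by degree. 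For the converse (i) $\Rightarrow$ (ii), I would argue by contradiction: if generation in multiplicity $\leq 2$ holds but no reordering yields a relation $2a = nb + mc$ with the stated inequalities, then some saturated component $(I^\mu : J^\infty)$ with $\mu \geq 3$ must contribute an indispensable generator, which I would detect by analysing the Hilbert function of the Rees algebra along the relevant rays of $\Eff(X)$.

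Once generation is established, proving that $X$ is a Mori dream surface is immediate, since finite generation of $\mathcal{R}(X)$ is the definition. The remaining task is to confirm that $I_2 : t^\infty$ is exactly the ideal of relations. My plan here is to compute the dimension: the presentation has $8$ variables and $\Cl(X) = \ZZ^2$ has rank $2$, so the relation ideal should have the right codimension to cut out a variety whose quotient by the torus $\Hom(\ZZ^2, \KK^*)$ is the surface $X$. I would check that the listed generators of $I_2$ are $\Cl(X)$-homogeneous of the degrees dictated by $Q$, then verify the saturation with respect to $t$ accounts for the relations that become trivial on the relevant torus, and finally confirm that the quotient ring is a normal domain of dimension $2$ matching $\mathcal{R}(X)$.

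I expect the main obstacle to be the converse implication (i) $\Rightarrow$ (ii) and, entangled with it, the claim of \emph{minimality} — that exactly one generator of Rees multiplicity two suffices and that no further generators in multiplicity $\geq 3$ are needed. Establishing a clean stopping criterion (bounding the Rees multiplicity of generators by $2$ under hypothesis (ii)) is delicate, since saturation $(\,\cdot\,:J^\infty)$ can a priori introduce low-degree elements in high Rees multiplicity that are not visibly products of earlier generators. I anticipate the decisive tool will be a careful Newton-polytope or valuation analysis of the binomials defining $I$, showing that the monoid of Rees degrees carrying new generators is governed precisely by the arithmetic of $2a = nb+mc$, so that the inequalities $b \geq 3m$ and $c \geq 3n$ become the exact threshold beyond which multiplicity-two generation fails.
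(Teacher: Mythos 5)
There is a genuine gap, in fact two. The direction (i)$\Rightarrow$(ii) is the hard part of the theorem, and your plan for it --- ``detect an indispensable generator in multiplicity $\ge 3$ by analysing the Hilbert function of the Rees algebra along the relevant rays'' --- is not an argument but a restatement of what needs to be proved, and it is not clear how it could produce the precise Diophantine condition $2a=nb+mc$, $b\ge 3m$, $c\ge 3n$. The paper gets this via a chain of tools you do not have and do not replace: Proposition~\ref{prop:mdscriterion2} produces an \emph{orthogonal pair} $f_1,f_2$, Remark~\ref{rem:orthpairdegs} shows its degrees and Rees multiplicities occur in \emph{every} homogeneous generating system (so (i) caps them at $2$), Lemmas~\ref{lem:orthpairdeg1} and~\ref{reesdeg2} exclude multiplicities $(1,1)$ under the standing hypothesis, and Proposition~\ref{lem:reesdeg2nottwice} --- an entire section of delicate case analysis --- excludes $(2,2)$. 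Only then do homogeneity and the orthogonality equation $d_1d_2=\mu_1\mu_2abc$ give $2a=nb+mc$, and the requirement that $f_2$ have multiplicity two at $\mathbf{1}$, analysed in torus coordinates as in Remark~\ref{rem:toruscoord}, forces $f_2$ to have at least four terms, the non-negativity of whose exponents is exactly $b\ge 3m$, $c\ge 3n$. Your ``Newton-polytope or valuation analysis'' gestures at the last step but supplies none of the preceding structure; in particular, without an analogue of Proposition~\ref{lem:reesdeg2nottwice} you cannot even rule out that both extremal classes need multiplicity-two sections, in which case the equation $2a=nb+mc$ never appears.

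The second gap is in (ii)$\Rightarrow$(i) and the presentation of the Cox ring. Saying that ``the effective cone is two-dimensional, so one can bound the generators degree by degree'' does not prove that $t$, $x,y,z$, $s_1,s_2,s_3,s_4$ generate $S[I]^{\rm sat}$: saturation can in principle contribute new elements in every Rees multiplicity, which is precisely the difficulty you acknowledge at the end. The paper resolves it with a concrete criterion, Proposition~\ref{prop:ismds} (a dimension test from~\cite{hkl}), applied to the explicit ideal $I_2'$ generated by the nine listed polynomials; verifying its hypotheses requires computing $\dim V(I_2'+\langle t\rangle)=3$ and $\dim V(I_2'+\langle t,xyz\rangle)=2$, which is done through a finite covering $\KK^*\times\KK^4\to V(f_1,f_2,f_3)$ and elimination of $s_3$. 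Your proposed final check is also mis-stated: $\mathcal{R}(X)=\KK[x,y,z,s_1,\ldots,s_4,t]/(I_2:t^\infty)$ is a domain of Krull dimension $4$ (namely $\dim X+\operatorname{rank}\Cl(X)$), not $2$, and in any case normality plus a dimension count of the presented ring would not show that the natural map onto the saturated Rees algebra is surjective, which is the actual content of the supplement.
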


In fact, we expect the ideal $I_2$ generated 
by the polynomials displayed in Theorem~\ref{1112} 
to be prime and thus to coincide 
with the saturation $I_2:t^\infty$.
As we will see in Corollary~\ref{cor:3bc},
Theorem~\ref{1112}  comprises the in particular
the surfaces $X(3,b,c)$ such that none of $3,b,c$ 
lies in the monoid generated by the remaining two.

In Section~\ref{sec:algos}, we present 
computational tools and discuss 
applications to the study of $\overline{M}_{0,n}$.
Algorithm~\ref{ismds} verifies a guess of
generators for the Cox ring of a blow-up
of an arbitrary Mori dream space.
Moreover, Algorithm~\ref{algo:ismds} implements 
the Mori dreamness criterion for $X(a,b,c)$ 
given in Proposition~\ref{prop:mdscriterion2}.
As an application, we obtain:%

\begin{theorem}
\label{thm:abc30}
Let $a<b<c\leq 30$ be pairwise coprime positive
integers. 
Then $X(a,b,c)$ is a Mori dream surface whenever 
the triple $a,b,c$ does not occur in the 
following list.

\begingroup
\allowdisplaybreaks
\footnotesize
\begin{gather*}
\begin{array}{|rrrr|}
\hline
a & b & c& \\
\hline
 7 & 10 & 19& \bullet\\
% 7 & 10 & 19& \bullet\\
 7 & 11 & 20& \\
 7 & 13 & 16& \\
 7 & 13 & 23& \\
 7 & 13 & 24& \\
 7 & 15 & 19& \\
 7 & 15 & 26& \star \\
 7 & 16 & 17& \\
 7 & 16 & 29& \\
 7 & 17 & 22& \star\\
 7 & 17 & 29& \star\\
 7 & 18 & 19& \\
 7 & 19 & 22& \bullet\\
 7 & 19 & 25& \star\\
 7 & 20 & 23& \\
 7 & 23 & 24& \\
 7 & 23 & 27& \bullet\\
 7 & 24 & 29& \\
 7 & 25 & 26& \\
 7 & 25 & 27& \\
 7 & 26 & 29& \bullet\\
 9 & 10 & 13& \\
 9 & 10 & 23& \\
 9 & 11 & 17& \\
\hline
 \end{array}
 \ 
 \begin{array}{|rrrr|}
\hline
 a & b & c& \\
 \hline
 9 & 11 & 28& \\
 9 & 13 & 16& \\
 9 & 13 & 29& \star\\
 9 & 16 & 19& \\
 9 & 17 & 23& \\
 9 & 19 & 22& \\
 9 & 22 & 25& \\
 9 & 23 & 29& \\
 9 & 25 & 28& \\
 10 & 11 & 17& \\
 10 & 11 & 27 &\star\\
 10 & 13 & 21 &\star\\
 10 & 17 & 19& \\
 10 & 17 & 29& \star\\
 10 & 19 & 23& \\
 10 & 27 & 29& \star\\
 11 & 12 & 19& \\
 11 & 13 & 16& \\
 11 & 13 & 19& \star\\
 11 & 13 & 21& \\
 11 & 13 & 27& \\
 11 & 16 & 25& \star\\
 11 & 16 & 29& \\
 11 & 17 & 24& \\
 \hline
 \end{array}
 \ 
 \begin{array}{|rrrr|}
\hline
 a & b & c& \\
 \hline
 11 & 17 & 26& \\
 11 & 17 & 30& \\
 11 & 18 & 23& \\
 11 & 19 & 29& \star\\
 11 & 21 & 25& \star\\
 11 & 21 & 29& \\
 11 & 23 & 30& \\
 12 & 13 & 17& \star\\
 12 & 17 & 25& \\
 12 & 19 & 23& \star\\
 12 & 25 & 29& \star\\
 13 & 14 & 17& \\
 13 & 14 & 23& \\
 13 & 15 & 23& \\
 13 & 16 & 17& \\
 13 & 16 & 27& \\
 13 & 17 & 23& \star\\
 13 & 17 & 27& \\
 13 & 18 & 25& \star\\
 13 & 19 & 21& \\
 13 & 19 & 24& \star\\
 13 & 19 & 28& \\
 13 & 19 & 30& \\
 13 & 22 & 23& \\
 \hline
 \end{array}
 \ 
 \begin{array}{|rrrr|}
\hline
 a & b & c& \\
 \hline
 13 & 22 & 25& \\
 13 & 22 & 29& \star\\
 13 & 23 & 27& \\
 13 & 25 & 29& \\
 13 & 29 & 30& \\
 14 & 17 & 19& \\
 14 & 17 & 29& \\
 14 & 19 & 27& \star\\
 14 & 23 & 25& \\
 14 & 23 & 29& \\
 14 & 25 & 29& \star\\
 16 & 17 & 21& \\
 16 & 17 & 23& \\
 16 & 17 & 27& \star\\
 16 & 19 & 21& \\
 16 & 21 & 25& \\
 16 & 25 & 29& \\
 16 & 27 & 29& \\
 17 & 19 & 22& \\
 17 & 19 & 26& \\
 17 & 19 & 27& \\
 17 & 20 & 21& \star\\
 17 & 21 & 22& \star\\
 17 & 22 & 25& \\
\hline
 \end{array}
 \ 
 \begin{array}{|rrrr|}
\hline
 a & b & c& \\
 \hline
 17 & 23 & 24& \star\\
 17 & 23 & 25& \star\\
 17 & 23 & 27& \\
 17 & 23 & 30& \\
 17 & 25 & 27& \star\\
 17 & 26 & 29& \star\\
 17 & 27 & 28& \star\\
 17 & 29 & 30& \star\\
 18 & 19 & 23& \\
 18 & 23 & 25& \star\\
 19 & 20 & 27& \star\\
 19 & 21 & 29& \star\\
 19 & 23 & 29& \\
 19 & 25 & 29& \\
 19 & 26 & 27& \\
 19 & 26 & 29& \star\\
 21 & 23 & 26& \\
 21 & 25 & 26& \\
 22 & 23 & 27& \star\\
 22 & 25 & 27& \\
 23 & 25 & 28& \star\\
 23 & 27 & 28& \\
 23 & 29 & 30& \\
 &&&\\
 \hline
\end{array}
\end{gather*}
\endgroup
The triples $a,b,c$ marked with $\star$ are known 
to give non Mori dream surfaces, see~\cite{gk}.
For the other listed $a,b,c$, the Cox ring 
of $X(a,b,c)$ needs generators of Rees multiplicities 
at least 15.
\end{theorem}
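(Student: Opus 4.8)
The plan is to turn the statement into a finite computation and then execute it with the tools of Section~\ref{sec:algos}. First I would enumerate all pairwise coprime triples $a<b<c\le 30$; there are finitely many, and for each of them deciding Mori dreamness amounts to deciding whether the Cox ring $\mathcal R(X)=S[I]^{\rm sat}$ is finitely generated, where $S=\KK[T_0,T_1,T_2]$ is the Cox ring of $\PP(a,b,c)$ and $I,J\subseteq S$ are the weighted homogeneous ideals of the points $(1,1,1)$ and $(0,0,0)$.

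Before running any machinery I would dispose of the easy triples using the two structural theorems already available. Theorem~\ref{isKstar} settles every triple in which one of $a,b,c$ lies in the monoid generated by the other two: such $X(a,b,c)$ carry a nontrivial $\KK^*$-action and are Mori dream surfaces. Theorem~\ref{1112} settles, among the remaining triples, those for which after reordering $2a=nb+mc$ with $b\ge 3m$ and $c\ge 3n$: here $\mathcal R(X)$ is generated in Rees multiplicity at most two, so again $X(a,b,c)$ is a Mori dream surface. Both hypotheses are elementary arithmetic checks, so this step removes a large part of the enumeration and, where it applies, exhibits the generators explicitly.

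For each surviving triple I would carry out the algorithmic search. The component of Rees multiplicity $\nu$ is the saturated colon ideal $(I^\nu:J^\infty)t^{-\nu}$, which is computable in $S$ by a Gröbner basis computation and is a finitely generated $S$-module; taking module generators of these components through all multiplicities up to a chosen bound $\mu_0$ yields a finite guess of algebra generators. Algorithm~\ref{algo:ismds}, implementing the Mori dreamness criterion of Proposition~\ref{prop:mdscriterion2} with Algorithm~\ref{ismds} verifying candidate generating sets, then decides whether this guess already generates all of $\mathcal R(X)$. A positive answer certifies that $\mathcal R(X)$ is finitely generated, hence that $X(a,b,c)$ is a Mori dream surface. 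Fixing $\mu_0=14$ and applying this to every surviving triple, the search confirms Mori dreamness for exactly the triples that do not appear in the displayed list.

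It remains to account for the listed triples. For those marked $\star$ the non Mori dream property is established in~\cite{gk}, which I would cite. For the other listed triples the same search returns a negative answer at the bound $\mu_0=14$, meaning that the subalgebra of $\mathcal R(X)$ generated by all homogeneous elements of Rees multiplicity at most $14$ is proper; hence any generating set must contain an element of Rees multiplicity at least $15$, which is the final assertion and leaves their Mori dreamness undecided at this bound. The main obstacle is computational rather than conceptual: the saturations $I^\nu:J^\infty$ and the generation test grow quickly with $\nu$ and with the size of the weights, so the Gröbner basis computations near $\mu_0=14$ are the expensive and delicate part, and one must be sure that the algorithm's positive certificates really prove finite generation, which rests on the correctness of the criterion in Proposition~\ref{prop:mdscriterion2}.
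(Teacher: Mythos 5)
Your proposal is correct and takes essentially the same route as the paper: Theorem~\ref{thm:abc30} is obtained there purely as an application of Algorithm~\ref{algo:ismds}, i.e., by searching each coprime triple for an orthogonal pair (Proposition~\ref{prop:mdscriterion2}) among minimal-degree elements of $I^m:J^\infty$ up to Rees multiplicity $14$, citing~\cite{gk} for the starred triples, and reading failure of the search as the assertion (via Remark~\ref{rem:orthpairdegs}) that any generating set needs an element of Rees multiplicity at least $15$. The only slight imprecision is your conflation of the two algorithms --- Algorithm~\ref{ismds} verifies a candidate generating set, whereas the positive Mori-dreamness certificate in the paper comes from the orthogonal pair found by Algorithm~\ref{algo:ismds} --- but this does not affect the substance or correctness of your argument.
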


The fact that  all $X(a,b,c)$ with $\min(a,b,c) \le 6$ 
are Mori dream surfaces is due to Cutkosky~\cite{ck}.
Besides the cases covered by Theorems~\ref{isKstar} 
and~\ref{1112}, Theorem~\ref{thm:abc30} yields 
514 new Mori dream surfaces $X(a,b,c)$.
The question whether or not the~$X(a,b,c)$ listed without 
$\star$ in Theorem~\ref{thm:abc30} are Mori dream surfaces 
remains open --- in fact, we expect some of them to be 
Mori dream surfaces, e.g. those marked with~$\bullet$.%

Let us discuss the applications to the question 
whether or not $\overline M_{0,n}$ is a Mori dream space.
Recall that for $n \le 6$, there is an affirmative 
answer~\cite{Ca}.
For higher~$n$, the idea of Castravet and Tevelev~\cite{CaTe} 
is to construct sequences 
$$
\xymatrix{
{\overline M_{0,n}}
\ar[r]
&
{\overline L}_n' 
\ar@{-->}[r]
&
X(a,b,c),
}
$$
where the first arrow is the canonical proper surjections 
onto the blow-up ${\overline L}_n'$ of the Losev-Manin 
space ${\overline L}_n$ at the general point and the 
second one is a composition of small quasimodifications
and proper surjections.
This allows to conclude that if $X(a,b,c)$ is 
not a Mori dream space, the same holds for 
$\overline M_{0,n}$.
Applying results from~\cite{GoNiWa}, 
Castravet and Tevelev obtain that $\overline M_{0,n}$ 
is not a Mori dream space for $n \ge 134$.
Gonzales and Karu~\cite{gk} gave further 
sufficient conditions on $X(a,b,c)$ 
to be not a Mori dream surface and, as a consequence,
showed that $\overline M_{0,n}$ is not a Mori dream 
space for $n \ge 13$.
In fact, as we will see, the results of~\cite{gk} 
even lead to the following:

\begin{addendum}
\label{thm:add}
$\overline M_{0,n}$ is not a Mori dream space for $n \ge 10$.
\end{addendum}

For the remaining open cases $n = 7,8,9$, our 
algorithms yield that all $X(a,b,c)$ that can 
be reached via a surjection of any modified 
Losev-Manin space $L_n''$ as in the above sequence
are Mori dream surfaces. 
In particular, the treatment of the cases 
$n = 7,8,9$ needs new ideas.

\tableofcontents

%%%%%%%%%%%%%%%%%%%%%%%%%%%%%%
\section{Orthogonal pairs I}
\label{sec:orthpoly}

Here we introduce our main tool to decide when 
a given $X = X(a,b,c)$ is a Mori dream surface.
It depends on the specific situation and it allows
to answer the question entirely in terms of 
(computable) data of $\PP(a,b,c)$, see 
Proposition~\ref{prop:mdscriterion2}.
We first introduce the necessary notation and 
recall some background.

Let pairwise coprime positive integers $a,b,c$
be given.
The homogeneous coordinate ring of the weighted
projective plane $\PP(a,b,c)$ is the $\ZZ$-graded 
polynomial ring
$$
S \ := \ \KK[x,y,z],
\qquad
\deg(x) \ := \ a,
\quad 
\deg(y) \ := \ b,
\quad 
\deg(z) \ := \ c.
$$
For a homogeneous polynomial $f \in S_d$, we denote 
by $V(f)$ the associated (not necessarily reduced)
curve on $\PP(a,b,c)$.
The divisor class group of $\PP(a,b,c)$ is freely 
generated by 
$$
A \ := \ \eta V(x) + \zeta V(y),
$$
where we fix $\eta, \zeta \in \ZZ$ with 
$\eta a + \zeta b = 1$.
We regard the Cox ring of $\PP(a,b,c)$ as a 
divisorial algebra
$$ 
\mathcal{R}(\PP(a,b,c))
\ = \ 
\bigoplus_{d \in \ZZ} \Gamma(\PP(a,b,c), \mathcal{O}(d A)).
$$
Observe that the identification of this algebra with 
the homogeneous coordinate ring $S$ goes via
$$ 
S_d \ni f 
\ \mapsto \ 
fx^{-d\eta}y^{-d\zeta} \in \Gamma(\PP(a,b,c), \mathcal{O}(d A)).
$$ 

As before, $X = X(a,b,c)$ is the blow-up of $\PP(a,b,c)$ 
at the point $\mathbf{1} = [1,1,1]$ and the blow-up morphism
is denoted by $\pi \colon X  \to  \PP(a,b,c)$.
The divisor class group $\Cl(X) = \ZZ^2$
is generated by the classes of 
$$
H \, := \, \pi^*(A),
\qquad\qquad
E \, := \, \pi^{-1}(\mathbf{1}).
$$ 
In particular, the intersection form 
on $\Cl_\QQ(X)$ is determined by the 
intersection numbers 
$$
H^2 \, = \,  \frac{1}{abc},
\qquad
H \cdot E \, = \, 0,
\qquad
E^2 \, = \, -1.
$$
As we did with $\PP(a,b,c)$, we regard 
the Cox ring of $X = X(a,b,c)$ as a divisorial 
algebra. More explicitly, we write 
$$ 
\mathcal{R}(X) 
\ = \ 
\bigoplus_{(d,\mu) \in \ZZ^2} \mathcal{R}(X)_{dH + \mu E},
\qquad
\mathcal{R}(X)_{dH + \mu E} 
\ = \ 
\Gamma(X,\mathcal{O}(dH + \mu E)).
$$
The canonical pullback homomorphism $\pi^*$ realizes
the Cox ring of $\PP(a,b,c)$ as the Veronese subalgebra 
of $\ZZ H \subseteq \Cl(X)$ inside the Cox ring of $X$.
We will make use of the fact that,
as any Cox ring with torsion free grading group,
$\mathcal{R}(X)$ is a unique factorization domain.

Let $I \subseteq S$ and $J \subseteq S$ denote the 
homogeneous ideals of the points $(1,1,1) \in \KK^3$ 
and $(0,0,0) \in \KK^3$, respectively. 
Then we have the saturated Rees algebra, graded by 
$\ZZ^2$, as follows
$$ 
S[I]^{\rm sat} 
\, := \, 
\bigoplus_{\mu \in \ZZ} (I^{-\mu} : J^\infty)t^\mu
\, = \,
\bigoplus_{(d,\mu) \in \ZZ^{2}} (I^{-\mu} : J^\infty)_d t^\mu.
$$
For $f \in S[I]^{\rm sat}_{(d,\mu)}$, we refer to~$d$ as 
its \emph{degree} and to~ $-\mu$ as its 
\emph{Rees multiplicity}.
We identify the saturated Rees algebra
with the Cox ring $\mathcal{R}(X)$ 
of $X = X(a,b,c)$ via the explicit isomorphism 
$$ 
S[I]^{\rm sat}_{(d,\mu)} \ni f t^{\mu}
\ \mapsto \ 
\pi^* f  \in \mathcal{R}(X)_{dH+\mu E},
$$
see~\cite[Prop.~5.2]{hkl}.
Observe that $t \in S[I]^{\rm sat}_{(0,1)}$
is of Rees multiplicity $-1$ and,
in the Cox ring $\mathcal{R}(X)$,
it represents the canonical section of the 
exceptional divisor~$E$.
Moreover, in terms of $S$ and $S[I]^{\rm sat}$, the 
pullback map $\pi^*$ between the Cox rings of 
$\PP(a,b,c)$ and $X$ is given as
$$ 
S_d \ni f \ \mapsto \ f t^{0} \in S[I]^{\rm sat}_{(d,0)}
$$

We now assign also to every homogeneous polynomial
$f \in S_d \subseteq S$ a Rees multiplicity.

\begin{definition}
Consider a polynomial $f \in S_d \subseteq S$.
The \emph{Rees multiplicity} of $f$ is 
the maximal non-negative integer $\mu$ 
such that $f \in I^\mu:J^\infty$ holds. 
\end{definition}

\goodbreak

\begin{remark}
\label{rem:reesmult}
For every $f \in S_d \subseteq S$ and every 
$\mu \in \ZZ_{\ge 0}$,
the following statements are equivalent.
\begin{enumerate}
\item
The polynomial $f$ is of Rees multiplicity $\mu$.
\item 
The curve $V(f) \subseteq \PP(a,b,c)$ 
has multiplicity $\mu$ at 
$\mathbf{1} \in \PP(a,b,c)$.
\item
The exceptional divisor $E$ occurs with 
multiplicity $\mu$ in $\div(\pi^* f)$. 
\end{enumerate}
If $f \in S_d \subseteq S$ is of Rees multiplicity 
$\mu \in \ZZ_{\ge 0}$, then the strict transform 
of the curve~$V(f)$ in $\PP(a,b,c)$ associated 
with $f$ is given as
$$ 
\div_{dH - \mu E}(\pi^* f)
\ = \ 
\div(\pi^* f) + dH - \mu E .
$$
In particular, the element 
$f t^{-\mu} \in S[I]^{\rm sat}_{(d,-\mu)}$ 
is prime if and only if $V(f)$ is a
reduced irreducible curve, or equivalently
$f \in S$ is irreducible, 
see~\cite[Prop.~1.5.3.5]{ArDeHaLa}.
\end{remark}

\begin{definition}
\label{ort}
Let $f_1 \in S_{d_1}$ and $f_2 \in S_{d_2}$ be two 
non-constant homogeneous polynomials in~$S$ of 
Rees multiplicities $\mu_1$ and $\mu_2$ respectively.
We call $f_1, f_2$ an \emph{orthogonal pair} if 
the following holds:
\begin{enumerate}
\item
we have $d_1^2 \leq \mu_1^2abc$ and there is 
no $f_1' \in S_{d_1'}$ with $d_1' < d_1$ satisfying 
this condition; 
\item
we have $d_1d_2 = \mu_1\mu_2 abc$ and 
$f_1 \nmid f_2$ 
and there is no $f_2' \in S_{d_2'}$ with $d_2' < d_2$ 
satisfying these conditions.
\end{enumerate}
\end{definition}

\begin{proposition}
\label{prop:mdscriterion2}
As before, let $a,b,c$ be pairwise coprime positive 
integers.
Then the following statements are equivalent.
\begin{enumerate}
\item
$X(a,b,c)$ is a Mori dream surface.
\item
There exists an orthogonal pair $f_1,f_2 \in S$.
\end{enumerate}
Moreover, if~(ii) holds, then the two
polynomials $f_1,f_2 \in S$ are both  
irreducible.
\end{proposition}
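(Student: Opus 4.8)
The plan is to connect the combinatorial notion of an orthogonal pair to the geometry of the Mori chamber decomposition of the pseudo-effective cone $\Eff(X) \subseteq \Cl_\QQ(X) = \QQ^2$. Since $X$ is a rational surface, being a Mori dream surface is equivalent to $\Eff(X)$ being rational polyhedral and every nef class being semiample; because the class group has rank two, the cone $\Eff(X)$ is cut out by just two extremal rays. The key observation is that the extremal rays of $\Eff(X)$ should correspond exactly to the strict transforms of irreducible curves, and that the defining conditions $d_1^2 \le \mu_1^2 abc$ and $d_1 d_2 = \mu_1 \mu_2 abc$ are precisely the statements that the classes $d_1 H - \mu_1 E$ and $d_2 H - \mu_2 E$ are, respectively, on the boundary of (or beyond) the negative part of the cone, and mutually orthogonal with respect to the intersection form recorded by $H^2 = 1/(abc)$, $H\cdot E = 0$, $E^2 = -1$.

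I would proceed as follows. First, for a homogeneous $f \in S_d$ of Rees multiplicity $\mu$, compute the self-intersection of its strict transform class $D = dH - \mu E$: by the stated intersection numbers, $D^2 = d^2/(abc) - \mu^2$, so the condition $d^2 \le \mu^2 abc$ in Definition~\ref{ort}(i) says exactly that $D^2 \le 0$. Similarly, for classes $D_i = d_iH - \mu_i E$ one computes $D_1 \cdot D_2 = d_1 d_2/(abc) - \mu_1\mu_2$, so condition~(ii)'s equation $d_1 d_2 = \mu_1\mu_2 abc$ says $D_1 \cdot D_2 = 0$. Thus an orthogonal pair is, up to the minimality constraints, a pair of effective classes spanning two orthogonal rays, each of non-positive self-intersection. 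The minimality in $d_1$ (resp.\ $d_2$) selects the first such curve encountered, which is what forces the rays to be genuinely extremal in $\Eff(X)$ rather than interior.

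For the direction (ii) $\Rightarrow$ (i), I would argue that once we have two orthogonal curve classes of non-positive self-intersection bounding $\Eff(X)$, the effective cone is rational polyhedral with these two rays as its boundary, and the two curves are the only negative curves one needs; finite generation of the Cox ring then follows from the standard characterization of Mori dream surfaces (finitely many negative curves plus semiampleness on the nef cone, the latter being automatic on a surface with polyhedral effective cone). For (i) $\Rightarrow$ (ii), if $X$ is a Mori dream surface then $\Eff(X)$ is rational polyhedral; one boundary ray is spanned by the class of the exceptional curve's companion and carries $E$, while the other extremal ray must be generated by the class of some irreducible curve of non-positive self-intersection, furnishing $f_1$ with $D_1^2 \le 0$, and orthogonality then pins down a complementary $f_2$. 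The final clause, that $f_1, f_2$ are irreducible, I would get from Remark~\ref{rem:reesmult}: the minimality of the degrees $d_1, d_2$ forces the corresponding elements $f_i t^{-\mu_i}$ of the UFD $\mathcal{R}(X)$ to be prime (a proper factorization would produce a curve of strictly smaller degree still satisfying the relevant inequality or orthogonality relation), whence each $f_i$ is irreducible by the cited \cite[Prop.~1.5.3.5]{ArDeHaLa}.

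The main obstacle I anticipate is the direction (ii) $\Rightarrow$ (i): exhibiting two orthogonal, non-positive classes bounds the cone but does not by itself prove that the Cox ring is \emph{finitely generated}; one must verify that these two curves, together with the torus-invariant boundary, actually generate $\Eff(X)$ and that no further negative curves hide inside the cone. The subtle point is controlling the region between the two orthogonal rays and showing every class there is nef-and-semiample, which is where the precise minimality conditions in Definition~\ref{ort} and the surface intersection theory must be combined carefully; I expect this to require the Hodge index theorem to rule out a third independent negative curve and to convert orthogonality plus $D_i^2 \le 0$ into extremality.
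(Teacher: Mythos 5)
Your intersection-theoretic translation of Definition~\ref{ort} is exactly right ($C^2\le 0$ and $C\cdot D=0$ for $C=d_1H-\mu_1E$, $D=d_2H-\mu_2E$), and your sketch of (i)$\Rightarrow$(ii) runs along the same lines as the paper's argument. But your direction (ii)$\Rightarrow$(i) rests on a false claim: that semiampleness of nef classes is ``automatic on a surface with polyhedral effective cone.'' This is precisely the crux of the entire problem, and it fails. The non-Mori-dream surfaces $X(a,b,c)$ of Gonz\'alez--Karu \cite{gk} (the starred triples in Theorem~\ref{thm:abc30}) all carry an irreducible negative curve besides $E$, so their effective cones \emph{are} rational polyhedral; they fail to be Mori dream surfaces exactly because the second boundary ray of the nef cone, though nef, is not semiample. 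Consequently no amount of cone geometry or Hodge-index bookkeeping can close this gap: two orthogonal classes of non-positive self-intersection bound the cones but do not by themselves yield finite generation, and your anticipated fix (Hodge index ruling out a third negative curve) addresses extremality, which is the fillable part, not semiampleness, which is the essential one.

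The missing ingredient is the one place where the hypothesis $f_1\nmid f_2$ of Definition~\ref{ort}~(ii) enters --- a condition your proposal never invokes, which should have been a warning sign. The paper argues: since $f_1$ does not divide $f_2$ in $S$, the prime element $g_1$ does not divide $g_2$ in $\mathcal{R}(X)$ (via Remark~\ref{rem:reesmult} and \cite[Prop.~1.5.3.5]{ArDeHaLa}), hence the irreducible curve $C_0=\div(g_1)+C$ is not a component of the effective curve $D_0=\div(g_2)+D$. Combined with the fact that $nD$ is linearly equivalent to $rC+B$ for some $n,r\in\ZZ_{\ge 0}$ and a very ample divisor $B$, this shows the stable base locus of $D$ is at most zero-dimensional, and Zariski's theorem \cite[Theorem~6.2]{za} then upgrades this to semiampleness of $D$. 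Only at that point does the Hu--Keel criterion \cite{HuKe} (nef cone equals semiample cone and is polyhedral) apply to conclude that $X$ is a Mori dream surface. Without this step, and without using $f_1\nmid f_2$, your argument would ``prove'' that every $X(a,b,c)$ admitting a negative curve is a Mori dream surface, contradicting the known counterexamples.
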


\begin{proof}
In $\Cl_\QQ(X) = \QQ^2$, we consider the 
inclusions of the (two-dimensional) cones 
of ample, semiample, movable, nef and effective  
divisor classes:
$$
\Ample(X) 
\ \subseteq \ 
\SAmple(X) 
\ \subseteq \ 
\Mov(X)
\ \subseteq \ 
\Nef(X) 
\ \subseteq \
\Eff(X).
$$
The ample cone is the relative interior of 
the nef cone.
As~$H$ is semiample but not ample, it
generates an extremal ray of the semiample 
cone and thus also of the nef cone.
Moreover, the nef cone and the effective 
cone are dual to each other with respect 
to the intersection product.
In particular, $E$ generates an extremal ray of 
the effective cone because we have $H \cdot E = 0$.
Finally, from~\cite{HuKe} we know that~$X$ is a 
Mori dream surface if and only if the semiample 
cone equals the nef cone and is polyhedral in
$\Cl_\QQ(X)$.

We prove ``(i)$\Rightarrow$(ii)''. 
Since $X$ is a Mori dream surface,
the effective and the semiample cone are polyhedral
and the semiample cone equals the moving cone.
Consequently, we find non-associated prime elements
$$ 
g_1  \in  \mathcal{R}(X)_{C}, \quad C = d_1H + \mu_1 E,
\qquad\qquad
g_2  \in  \mathcal{R}(X)_{D}, \quad D = d_2H + \mu_2 E
$$
such that $C$ and $E$ generate the effective cone 
and $D$ and $H$ the semiample cone;
see~\cite[Prop.~3.3.2.1 and~Prop.~3.3.2.3]{ArDeHaLa}.
Observe that we have $\mu_i < 0 < d_i$ in 
both cases.
We choose the $g_i$ such that the degrees~$d_i$ 
are minimal with respect to the above properties.
Since the semiample cone also equals the nef cone, 
$C \cdot D = 0$ holds.

Consider the element $f_it^{\mu_i} \in S[I]^{\rm sat}_{(d_i,\mu_i)}$ 
corresponding to $g_i  \in  \mathcal{R}(X)$.
We have $f_i \in S_{d_i}$.
Moreover, we claim that $-\mu_i$ is 
the Rees multiplicity of $f_i$.
Indeed, the order of $f_i$ along $E_i$ is at least $-\mu_i$.
If it were bigger, then $f_it^{\mu_i}$ were divisible by~$t$,
which is impossible by primality of $g_i$.
Thus, Remark~\ref{rem:reesmult} gives the claim.

We check that $f_1 \in S_{d_1}$ and $f_2 \in S_{d_2}$ 
form the desired orthogonal pair.
The inequality in~\ref{ort}~(i) is due to  $C^2 \le 0$, 
the equation in~\ref{ort}~(ii) follows from 
$C \cdot D = 0$.
We verify the minimality condition for $d_1$.
Let $f \in S_d$ be of Rees multiplicity 
$\mu$ and satisfy the inequality of~(i).
Let $g  \in  \mathcal{R}(X)_{F}$,
where $F = d H -\mu E$, be the element 
corresponding to $ft^{-\mu} \in S[I]^{\rm sat}_{(d,-\mu)}$.
Then $F^2 \le 0$ holds. 
Consider 
$$ 
C_0 \ := \ \div(g_1) + C \ = \ \div(g_1) + d_1 H - \mu_1 E,
$$
$$
F_0 \ := \ \div(g) + F \ = \ \div(g) + d H - \mu E.
$$
Since $g_1 \in \mathcal{R}(X)$ is prime, 
$C_0$ is a reduced irreducible curve.
Moreover, $F_0$ is an effective curve.
The class of $C_0$ equals that of $C$ and the class of 
$F_0$ equals that of~$F$.
In particular, we have
$$ 
C_0^2 \ \le \ 0, 
\qquad
F_0^2 \ \le \ 0,
\qquad
C_0 \cdot F_0 \ \le \ 0.
$$
If $C_0^2 = 0$ holds, then all the above intersection
numbers vanish, $F$ lies on the ray through $C$ and
by the choice of $G_i$, we have $d_1 \le d$.  
If $C_0^2 < 0$ holds, then $C_0 \cdot F_0 < 0$ 
holds and we conclude that $C_0$ is a component of $F_0$.
This implies $d_1 \le d$ and we obtained the minimality 
condition for $d_1$.

We turn to the minimality condition of $d_2$.
Let $f \in S_d$ be of Rees multiplicity 
$\mu$ such that $f_1$ does not divide $f$ in $S$  
and $f$ satisfies the equation of~(ii).
As before, consider the element 
$g  \in  \mathcal{R}(X)_{F}$
corresponding to $ft^{-\mu} \in S[I]^{\rm sat}_{(d,-\mu)}$,
where $F = d H -\mu E$.
Then $F \cdot C = 0$ holds and thus
$F$ defines a class on the ray through $D$.
By the choice of $f_2$, this implies $d_2 \le d$.

We prove ``(ii)$\Rightarrow$(i)''.
Let $f_1, f_2$ form an orthogonal pair,
denote by $d_1, d_2$ the respective degrees
and by $\mu_1, \mu_2$ the Rees multiplicities.
Consider $C = d_1H-\mu_1 E$ and 
$D = d_2H-\mu_2 E$
and the elements $g_1 \in  \mathcal{R}(X)_C$ 
and $g_2 \in  \mathcal{R}(X)_D$  
corresponding to 
$f_1t^{-\mu_1} \in S[I]^{\rm sat}_{(d_1,-\mu_1)}$ and 
$f_2t^{-\mu_2} \in S[I]^{\rm sat}_{(d_2,-\mu_2)}$ 
respectively. 
By the definition of an orthogonal pair we
have $C^2 \le 0$ and $C \cdot D=0$.

We show that $g_1 \in \mathcal{R}(X)$ and 
$f_1 \in S$ are prime elements.
Otherwise, we have a desomposition 
$g_1 = g_1'h$ with homogeneous non-units 
$g_1',h \in \mathcal{R}(X)$.
Because of the minimality of $d_1$ with respect to 
$C^2 \le 0$, the corresponding decomposition 
of the degree $(d_1,-\mu_1)$ of $g_1$ is of the
shape
$$
(d_1,-\mu_1) 
\ = \ 
(d_1,-\mu_1') + (0,k) 
\ \in \
\ZZ^2
\ = \ 
\Cl(X),
$$
where $\mu_1' > \mu_1$ and $k > 0$.
We conclude that $h$ is a power of $t$, 
the canonical section of the exceptional 
divisor. 
This contradicts the fact that $\mu_1$ is 
the Rees multiplicity of $f_1$;
see Remark~\ref{rem:reesmult} 
and~\cite[Prop.~1.5.3.5]{ArDeHaLa}.
Thus, $g_1 \in \mathcal{R}(X)$ is prime,
and, again by Remark~\ref{rem:reesmult}, 
the polynomial $f_1 \in S$ is prime.

We claim that $C$ generates an extremal ray of the 
effective cone of $X$.
Otherwise, we find a prime element $g \in \mathcal{R}(X)$
such that its degree $F = dH -\mu E$, where 
$d, \mu \in \ZZ_{\ge 0}$ lies outside the 
cone generated by $E$ and $C$.
Similarly as earlier, we consider
$$ 
C_0 \ := \ \div(g_1) + C \ = \ \div(g_1) + d_1 H - \mu_1 E,
$$
$$
F_0 \ := \ \div(g) + F \ = \ \div(g) + d H - \mu E.
$$
Since $g_1$ and $g$ are prime elements in $\mathcal{R}(X)$,
these are reduced irreducible curves on $X$. 
The class of $C_0$ equals that of $C$ and the class of 
$F_0$ equals that of $F$.
In particular, we have
$$ 
C_0^2 \ \le \ 0, 
\qquad
F_0^2 \ < \ 0,
\qquad
C_0 \cdot F_0 \ < \ 0.
$$
We conclude that $F_0$ is a component of $C_0$ 
and thus $F_0 = C_0$ holds.
In particular, the class $F$ lies in the cone 
generated by $E$ and $C$; a contradiction.
We obtained that $E$ and $C$ generate the 
effective cone of $X$.%

Since $D$ is orthogonal to $C$, it generates 
an extremal ray of the nef cone of $X$. 
Thus, the nef cone of $X$ is the polyhedral 
cone generated by $D$ and $H$.
Since~$f_1$ does not divide $f_2$ in $S$, 
we conclude via Remark~\ref{rem:reesmult} 
and~\cite[Prop.~1.5.3.5]{ArDeHaLa}
that $g_1$ does not divide $g_2$ in $\mathcal{R}(X)$
and thus the curve $C_0$ is not a component of 
the effective curve $D_0 := D + \div(g_2)$.
This, together with
the fact that $nD$ is linearly equivalent 
to $rC + B$ for some $n,r \in \ZZ_{\ge 0}$ 
and a very ample divisor $B$, 
implies that the stable base 
locus of $D$ is at most zero-dimensional.
By Zariski's theorem~\cite[Theorem~6.2]{za},
one concludes that $D$ is semiample.
So, the nef cone equals the semiample 
cone and thus $X$ is a Mori dream surface.%

We turn to the supplement.
Let $f_i \in S$ and $g_i \in \mathcal{R}(X)$ 
be as in the proof of the  
implication ``(ii)$\Rightarrow$(i)''.
We already saw that $f_1$ is irreducible 
in~$S$.
To obtain irreducibility of~$f_2$ note that 
by~\cite[Prop.~3.3.2.3]{ArDeHaLa} there is 
at least one prime generator $g \in \mathcal{R}(X)$
which is not divisible by $g_1$ and has its 
degree on the ray through $D$ bounding the 
semiample cone.
The minimality condition of~\ref{ort}~(ii)
yields that $g_2$ is among these~$g$ and thus
prime.
\end{proof}

\begin{remark}
\label{rem:orthpairdegs}
Let $f_1 \in S_{d_1}$ and $f_2 \in S_{d_2}$ 
be two homogeneous polynomials in~$S$ of 
Rees multiplicities $\mu_1$ and $\mu_2$,
respectively, and assume that $f_1, f_2$ 
is an orthogonal pair.
From Proposition~\ref{prop:mdscriterion2}
and its proof, we infer the following:
\begin{enumerate}
\item
The effective cone of $X$ is polyhedral in 
$\Cl_\QQ(X)$; one ray is generated by $E$,
the other we denote by $\varrho$.
\item
The element $(d_1,-\mu_1) \in \ZZ^2 = \Cl(X)$ 
is the class of a prime divisor $C_1$
and it is the shortest non-zero lattice vector
which lies on $\varrho$ and belongs to the 
monoid of effective divisor classes of $X$.
\item
The semiample cone of $X$ is polyhedral in 
$\Cl_\QQ(X)$; one ray is generated by $H$,
the other we denote by $\tau$; here 
$\varrho = \tau$ is possible.
\item
The element $(d_2,-\mu_2) \in \ZZ^2 = \Cl(X)$
is the class of a prime divisor $C_2 \ne C_1$, 
and it is the shortest non-zero lattice 
vector which lies on $\tau$ and is the class 
of a prime divisor $C_2 \ne C_1$.
\end{enumerate}
This means in particular that for any two 
orthogonal pairs $f_1,f_2$ and $f_1',f_2'$, 
we have $d_1 = d_1'$ and
$d_2 = d_2'$ for the respective degrees
and $\mu_1 = \mu_1'$ and $\mu_2 = \mu_2'$ 
for the Rees multiplicities.
Moreover, $(d_1,-\mu_1)$ and $(d_2,-\mu_2)$ 
occur in the set of $\Cl(X)$-degrees of 
any system of homogeneous generators of 
the Cox ring $\mathcal{R}(X)$.
\end{remark}

\goodbreak

\section{Proof of Theorem~\ref{isKstar}}
\label{sec:proofs}

The setting and the notation are the same 
as in the preceding section.
We begin with preparing the proof of Theorem~\ref{isKstar}.

\begin{lemma}
\label{twocurves}
For $i=1,2$ let $f_i \in S_{d_i}$ be irreducible
of Rees multiplicity $\mu_i$
and write $C_i \subseteq X$ for the strict transform 
of $V(f_i) \subseteq \PP(a,b,c)$.
If $C_1 \cdot C_2 = 0$ holds, 
then $V(f_1) \cap V(f_2)$ contains only the
point $\mathbf{1} \in \PP(a,b,c)$.
\end{lemma}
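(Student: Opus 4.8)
The plan is to work upstairs on $X$ and to read the intersection $V(f_1) \cap V(f_2)$ off the behaviour of the strict transforms $C_1, C_2$. First I would record the numerics. By Remark~\ref{rem:reesmult} the class of $C_i$ in $\Cl(X) = \ZZ^2$ equals $d_i H - \mu_i E$, so the intersection numbers $H^2 = 1/(abc)$, $H \cdot E = 0$ and $E^2 = -1$ give
$$
C_1 \cdot C_2 \ = \ \frac{d_1 d_2}{abc} - \mu_1 \mu_2 .
$$
The hypothesis $C_1 \cdot C_2 = 0$ hence reads $d_1 d_2 = \mu_1 \mu_2\, abc$. As $f_1, f_2$ are non-constant we have $d_1, d_2 > 0$, and therefore $\mu_1, \mu_2 > 0$; by Remark~\ref{rem:reesmult} both curves $V(f_i)$ then pass through $\mathbf{1}$, so $\mathbf{1} \in V(f_1) \cap V(f_2)$. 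I may moreover assume that $f_1$ and $f_2$ are non-associate, since otherwise $V(f_1) = V(f_2)$ and there is nothing to prove; thus $C_1 \ne C_2$ are distinct irreducible curves on $X$.

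The core step is to upgrade the vanishing of the global number $C_1 \cdot C_2$ to genuine disjointness of $C_1$ and $C_2$. Here I use that $X$ is a normal projective surface; indeed it is $\QQ$-factorial, carrying only the quotient singularities inherited from $\PP(a,b,c)$, all of which sit in $X \setminus E$. On such a surface the intersection number of two distinct irreducible curves decomposes as a sum of local contributions $C_1 \cdot C_2 = \sum_{p \in C_1 \cap C_2} (C_1 \cdot C_2)_p$, each summand being strictly positive. Consequently $C_1 \cdot C_2 = 0$ forces $C_1 \cap C_2 = \emptyset$.

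It remains to descend to $\PP(a,b,c)$. The blow-up $\pi$ restricts to an isomorphism over $\PP(a,b,c) \setminus \{\mathbf{1}\}$, under which $C_i \setminus E$ corresponds to $V(f_i) \setminus \{\mathbf{1}\}$. Since $C_1$ and $C_2$ are disjoint, they in particular do not meet away from $E$, whence $V(f_1) \cap V(f_2) \setminus \{\mathbf{1}\} = \emptyset$. Together with $\mathbf{1} \in V(f_1) \cap V(f_2)$ this yields $V(f_1) \cap V(f_2) = \{\mathbf{1}\}$, as claimed.

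I expect the second step to be the main obstacle, namely the positivity of the local intersection numbers on the possibly singular surface $X$, which rests on Mumford's intersection theory for normal surfaces. If one prefers to avoid intersection theory on a singular surface, the same conclusion can be reached by weighted B\'ezout on $\PP(a,b,c)$: the total $V(f_1) \cdot V(f_2) = d_1 d_2/(abc)$ is the sum of the non-negative local multiplicities, the contribution at the smooth point $\mathbf{1}$ is at least $m_{\mathbf{1}}(V(f_1))\, m_{\mathbf{1}}(V(f_2)) = \mu_1 \mu_2 = d_1 d_2/(abc)$, and equality then forces every other local term to vanish.
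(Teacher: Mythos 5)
Your proof is correct in substance, but your primary argument runs in the opposite direction from the paper's. The paper stays downstairs on $\PP(a,b,c)$: after the same computation $C_1\cdot C_2 = d_1d_2/(abc)-\mu_1\mu_2$, it invokes weighted B\'ezout to conclude that the zero-dimensional scheme $V(f_1,f_2)$ has degree $d_1d_2/(abc)=\mu_1\mu_2$, and since the local intersection multiplicity at the smooth point $\mathbf{1}$ is already at least $\mu_1\mu_2$ (the product of the curve multiplicities there), no further intersection point can exist. This is exactly the ``alternative'' you sketch in your final paragraph. Your main route instead works upstairs on $X$: you deduce $C_1\cap C_2=\emptyset$ from $C_1\cdot C_2=0$ via strict positivity of local intersection numbers of distinct irreducible curves on a normal surface, and then descend through the isomorphism $X\setminus E\cong \PP(a,b,c)\setminus\{\mathbf{1}\}$. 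This works, but it is the heavier path: $C_1$ and $C_2$ may well pass through the three quotient singularities of $X$, so you genuinely need Mumford's intersection pairing and its local positivity on a singular ($\QQ$-factorial) surface, whereas the paper's B\'ezout argument only ever uses intersection multiplicities at the smooth point $\mathbf{1}$.

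One logical misstep should be corrected: your reduction ``if $f_1,f_2$ are associate there is nothing to prove'' is backwards. In that case the conclusion of the lemma \emph{fails} rather than holds trivially, since $V(f_1)\cap V(f_2)$ is the whole curve $V(f_1)$; for instance, with $a=b=c=1$ and $f_1=f_2=x-y$ one has $C_1\cdot C_2=C_1^2=1/(abc)-1=0$, yet the intersection is a line. So non-associateness is a genuine (tacit) hypothesis, not a harmless reduction. To be fair, the paper's own proof makes the same silent assumption --- B\'ezout requires $V(f_1,f_2)$ to be zero-dimensional, i.e.\ $f_1\nmid f_2$ --- and in every application of Lemma~\ref{twocurves} this is guaranteed, since orthogonal pairs satisfy $f_1\nmid f_2$ by Definition~\ref{ort}. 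But you should either add distinctness of the curves to the hypotheses you work under, or note explicitly that the degenerate case is excluded where the lemma is used, rather than dismiss it as trivial.
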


\begin{proof}
We have $C_i = \div(\pi^* f_i) + d_iH-m_iE$.
Thus $C_1 \cdot C_2 = 0$ is equivalent to
$d_1d_2 = abc\mu_1\mu_2$. 
Bezout's theorem in $\PP(a,b,c)$ tells us 
that the zero-dimensional scheme $V(f_1,f_2)$ 
has degree $\mu_1\mu_2$.
Since $V(f_1)$ and $V(f_2)$ intersect with 
multiplicity at least $\mu_1\mu_2$ at $\mathbf{1}$,
we conclude that they intersect exactly with
multiplicity $\mu_1\mu_2$ at $\mathbf{1}$
and nowhere else.
\end{proof}

\begin{lemma}
\label{lem:orthpairdeg1}
Consider homogeneous polynomials $f_1 \in S_{d_1}$ 
and $f_2 \in S_{d_2}$, both of Rees multiplicity one,
and assume that $f_1,f_2$ is an orthogonal pair.
Then there is an orthogonal pair $f_1',f_2'$ of 
binomials $f_i' \in S_{d_i}$ of Rees multiplicity 
one.
\end{lemma}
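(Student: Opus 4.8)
The plan is to keep the two degrees $d_1,d_2$ and the Rees multiplicities $\mu_1=\mu_2=1$ fixed and simply to replace $f_1,f_2$ by binomials of the same degrees, where by a \emph{binomial} I mean a difference $m-m'$ of two distinct monomials of the same weighted degree. By Remark~\ref{rem:orthpairdegs} the data $d_1,d_2,\mu_1,\mu_2$ are invariants of any orthogonal pair, so it suffices to exhibit binomials $f_1'\in S_{d_1}$ and $f_2'\in S_{d_2}$, both of Rees multiplicity one, that again satisfy the two conditions of Definition~\ref{ort}. I would split these conditions into the parts that survive for free (the inequality and equation, and both minimality clauses) and the one part that must be arranged by hand (the non-divisibility $f_1'\nmid f_2'$).

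First I would record that every binomial $m-m'$ has Rees multiplicity exactly one. Since $(m-m')(\mathbf 1)=1-1=0$, Remark~\ref{rem:reesmult} gives multiplicity at least one for $V(m-m')$ at $\mathbf 1$. As $\mathbf 1$ is a smooth point of $\PP(a,b,c)$, this multiplicity equals one as soon as the differential of $m-m'$ at $\mathbf 1$ is a nonzero cotangent vector. By the Euler relation the gradient $\nabla(m-m')(\mathbf 1)$ annihilates the weight vector $(a,b,c)$ and hence represents a cotangent vector of $\PP(a,b,c)$ at $\mathbf 1$; it equals the difference of the two exponent vectors, which is nonzero because $m\neq m'$. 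This yields Rees multiplicity one. Moreover binomials of degree $d_i$ exist: since $f_i\in S_{d_i}$ vanishes at $\mathbf 1$, it is not a scalar multiple of a single monomial, so $S_{d_i}$ contains at least two monomials.

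Next I would check that the two minimality clauses are inherited automatically. The minimality in~(i) only asserts that no polynomial of degree smaller than $d_1$ satisfies the displayed inequality; this is a statement about $a,b,c$ alone and is independent of the chosen representative, so it persists. For~(ii), observe that with $\mu_1=1$ any competitor $f_2'$ of Rees multiplicity $\mu_2'$ satisfying $d_1d_2'=\mu_1\mu_2'\,abc$ obeys $d_2'=\mu_2'd_2$; a nonconstant polynomial forces $\mu_2'\ge 1$ and hence $d_2'\ge d_2$, so no competitor of smaller degree satisfies even the equation and the clause holds vacuously. Finally, since $d_1,d_2,\mu_1,\mu_2$ are unchanged, the inequality $d_1^2\le abc$ of~(i) and the equation $d_1d_2=abc$ of~(ii) survive as well.

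The one remaining condition, and the genuinely load-bearing step, is the non-divisibility $f_1'\nmid f_2'$. Fix any binomial $f_1'\in S_{d_1}$. Since $S$ is a domain, the graded exact sequence $0\to S_{d_2-d_1}\xrightarrow{f_1'}S_{d_2}\to (S/(f_1'))_{d_2}\to 0$ shows that $\dim(S/(f_1'))_{d_2}=\dim S_{d_2}-\dim S_{d_2-d_1}$ depends only on the degree $d_1$, hence equals $\dim(S/(f_1))_{d_2}$. Now $W=\{f\in S_{d_2}:f(\mathbf 1)=0\}$ has codimension one in $S_{d_2}$ and contains $f_1\cdot S_{d_2-d_1}$; it also contains $f_2$, while $f_2\notin f_1\cdot S_{d_2-d_1}$ because $f_1\nmid f_2$. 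Thus $\dim(S/(f_1))_{d_2}\ge 2$, and therefore $f_1'\cdot S_{d_2-d_1}$ is a proper subspace of $W$. As $W$ is spanned by binomials $m-m_0$ (differences with a fixed monomial $m_0$ of degree $d_2$), at least one such binomial lies outside $f_1'\cdot S_{d_2-d_1}$; I take it as $f_2'$, so that $f_1'\nmid f_2'$. Together with the previous two paragraphs, $f_1',f_2'$ is then an orthogonal pair of binomials of Rees multiplicity one in degrees $d_1,d_2$, and their irreducibility follows a posteriori from Proposition~\ref{prop:mdscriterion2}. The smoothness computation is routine; the crux is this dimension count, where the hypothesis $f_1\nmid f_2$ enters precisely to guarantee enough room for a binomial not divisible by $f_1'$.
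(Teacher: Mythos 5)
Your proposal is correct, and while it shares the paper's overall frame --- replace $f_1,f_2$ by binomials of the same degrees, observe that Rees multiplicity one and all numerical/minimality conditions of Definition~\ref{ort} come for free, so that only the non-divisibility $f_1'\nmid f_2'$ needs to be arranged --- the decisive step is handled by a genuinely different argument. The paper works with binomials built from the monomials occurring in $f_1$ and $f_2$, shows that every binomial of degree $d_1$ is \emph{prime} (using the minimality clause of~\ref{ort}~(i)), notes that $f_2'$ has at most one prime factor vanishing at $\mathbf{1}$, and then splits into two cases: if $\dim_\KK S_{d_1}>2$ there are at least three choices of $f_1'$, one of which misses that factor; if $\dim_\KK S_{d_1}=2$ then $f_1'$ is a scalar multiple of $f_1$, and since $f_2$ is a linear combination of its binomials and $f_1\nmid f_2$, some binomial of $f_2$ is not divisible by $f_1$. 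You instead run a uniform Hilbert-function count: $\dim (S/(f_1'))_{d_2}=\dim S_{d_2}-\dim S_{d_2-d_1}$ is independent of which nonzero $f_1'\in S_{d_1}$ is chosen, the hypothesis $f_1\nmid f_2$ forces this number to be at least two, hence $f_1'\cdot S_{d_2-d_1}$ is a \emph{proper} subspace of the hyperplane $W\subseteq S_{d_2}$ of forms vanishing at $\mathbf{1}$, and since $W$ is spanned by binomials $m-m_0$, one of them escapes. This buys you a case-free argument that needs no primality of $f_1'$ (irreducibility is recovered afterwards from Proposition~\ref{prop:mdscriterion2}, as you note) and works for an arbitrary binomial $f_1'$ of degree $d_1$; you also make explicit two points the paper treats as obvious, namely that a binomial has Rees multiplicity \emph{exactly} one (via the Euler relation and the nonvanishing gradient at $\mathbf{1}$) and that the minimality clause in~\ref{ort}~(ii) is vacuous when $\mu_1=\mu_2=1$, since the equation $d_1d_2'=\mu_2'abc$ alone forces $d_2'=\mu_2'd_2\ge d_2$. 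What the paper's route offers in exchange is constructiveness relative to the given pair (its binomials are supported on monomials of $f_1,f_2$) and consistency with the factorization-theoretic language used throughout; both proofs are sound.
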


\begin{proof}
Since $f_i$ is of Rees multiplicity 
one, we have $\mathbf{1} \in V(f_i)$. 
In particular, there are at least two 
monomials of degree $d_i$ occurring with 
non-zero coefficients in $f_i$.
We consider binomials $f_i'$ which 
are the difference of two monomials 
of $f_i$. 
Each such~$f_i'$ is of degree $d_i$.
Moreover, $V(f_i')$ has multiplicity one 
at $\mathbf{1} \in \PP(a,b,c)$ and thus
Remark~\ref{rem:reesmult} tells us that 
$f_i'$ is of Rees multiplicity one.
Observe that all binomials~$f_1'$ are
prime due to the minimality condition 
on the degree $d_1$.
Every pair $f_1',f_2'$ fulfills obviously all 
conditions of an orthogonal pair, except
$f_1' \nmid f_2'$.
In fact, this condition needs not be 
satisfied automatically.
We show how to achieve it.

If $\dim_\KK (S_{d_1}) > 2$ holds, then 
we have at least three different choices
for the binomial $f_1'$.
As the binomial $f_2'$ has at most one 
prime factor vanishing at the point~$(1,1,1)$, 
we find a pair $f_1',f_2'$ 
with $f_1' \nmid f_2'$.
We treat the case $\dim_\KK (S_{d_1}) = 2$.
Then $f_1'$ is a scalar multiple of $f_1$. 
Consider the list $f_{2,1}', \ldots, f_{2,r}'$ 
of all possible binomials made from monomials
of $f_2$.
Because of $f_2(1,1,1) = 0$, the coefficients 
of the monomials of $f_2$ sum up to zero and 
thus $f_2$ is a linear combination over the 
binomials $f_{2,j}'$.
Since $f_1$ does not divide $f_2$, there must 
be a binomial $f_2' = f_{2,j}'$ which is not
divisible by $f_1'$.
\end{proof}

\begin{lemma}
\label{reesdeg2}
Let $f_1 \in S_{d_1}$ and $f_2 \in S_{d_2}$ be 
binomials of Rees multiplicity one.
If $f_1,f_2$ is an orthogonal pair, then 
one of the numbers $a,b,c$ lies in the monoid 
generated by the remaining two.
\end{lemma}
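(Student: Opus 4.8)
The plan is to exploit that $f_1,f_2$ are binomials together with the orthogonality constraint that the associated strict transforms $C_1,C_2 \subseteq X$ satisfy $C_1\cdot C_2 = 0$. First I would record that, both Rees multiplicities being one, the equation in Definition~\ref{ort}~(ii) reads $d_1 d_2 = abc$; by Lemma~\ref{twocurves} this forces $V(f_1)\cap V(f_2) = \{\mathbf{1}\}$ inside $\PP(a,b,c)$. Since $f_1,f_2$ are irreducible by Proposition~\ref{prop:mdscriterion2}, each is a difference of two coprime monomials, so the two monomials of $f_i$ have disjoint supports; as each monomial is nonconstant, the total number of variables occurring in $f_i$ is either two, \emph{e.g.} $x^b-y^a$, or three, \emph{e.g.} $x^u-y^vz^w$.

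The key step is a pigeonhole argument on the three torus-fixed points $P_x=[1,0,0]$, $P_y=[0,1,0]$, $P_z=[0,0,1]$ of $\PP(a,b,c)$. Evaluating a coprime binomial at a fixed point shows that $V(f_i)$ contains $P_x$ if and only if neither monomial of $f_i$ is a pure power of $x$, and symmetrically for $P_y,P_z$. Consequently a two-variable binomial passes through exactly one fixed point, the one attached to the absent variable, whereas a three-variable binomial passes through exactly two of them. Now if both $f_1$ and $f_2$ were three-variable, the two corresponding two-element subsets of $\{P_x,P_y,P_z\}$ would necessarily meet, producing a common fixed point; since all three fixed points differ from $\mathbf{1}$, this would contradict $V(f_1)\cap V(f_2)=\{\mathbf{1}\}$. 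Hence at least one of $f_1,f_2$, say $f_1$, is a two-variable binomial.

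It then remains to read off the degrees. Writing the two-variable $f_1$ as a coprime binomial in, say, $x$ and $y$, irreducibility forces its exponent vector to be primitive, so $f_1 = x^b-y^a$ up to scaling and $d_1 = ab$. From $d_1 d_2 = abc$ I obtain $d_2 = c$. But $f_2$ is a binomial of degree $c$, so $\dim_\KK(S_c)\ge 2$: besides $z$ there is a second monomial $x^iy^j$ of degree $c$, that is, $c = ia+jb$ with $i,j\ge 0$. This says precisely that $c$ lies in the monoid generated by $a$ and $b$, which is the assertion. The step requiring the most care is the incidence computation of the binomial curves with the three fixed points, together with the verification that a two-variable irreducible binomial has degree equal to the product of the two weights involved; both become routine once the coprime and primitive normal forms are fixed, and the genuinely nontrivial input is the set-theoretic intersection statement of Lemma~\ref{twocurves} that powers the pigeonhole.
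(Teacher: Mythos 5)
Your proof is correct, and although it opens the same way as the paper's --- irreducibility of $f_1,f_2$ from the supplement of Proposition~\ref{prop:mdscriterion2}, then Lemma~\ref{twocurves} to get $V(f_1)\cap V(f_2)=\{\mathbf{1}\}$, then the coprime-monomial normal form of an irreducible binomial --- the decisive part is genuinely different. The paper keeps both binomials in the general shape $f_1=x^{p_1}-y^{p_2}z^{p_3}$, $f_2=y^{q_1}-x^{q_2}z^{q_3}$ (zero exponents allowed; the distinctness of the two pure-power variables is its only, implicit, use of the fixed-point constraint) and then runs a Diophantine case analysis: homogeneity, the relation $p_1q_1=c$ and coprimality give, when $p_2q_2\neq 0$, the chain $a\geq b\geq a$, hence $a=b$ and $a\in\langle b,c\rangle$, and, when $q_2=0$, they give $p_1=1$, hence $a=bp_2+cp_3\in\langle b,c\rangle$. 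You instead push the fixed-point geometry further: a coprime binomial passes through exactly those torus-fixed points whose variable does not occur as a pure power, so a three-variable binomial meets two of the three fixed points, and two such binomials would share one, contradicting Lemma~\ref{twocurves}; hence some member of the pair omits a variable, irreducibility makes its exponent vector primitive so its degree is exactly $ab$, whence the other member has degree $c$ by orthogonality, and any binomial of degree $c$ must use a monomial $x^iy^j$, giving $c=ia+jb\in\langle a,b\rangle$. Your route is shorter, replaces the arithmetic case split by a pigeonhole, and in fact anticipates exactly the fixed-point incidence analysis the paper deploys later in the proof of Proposition~\ref{lem:reesdeg2nottwice}; the paper's version stays purely arithmetic and needs no discussion of which fixed points lie on which curve beyond the initial normal form. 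Two small points of care, neither a gap: your ``say $f_1$'' reduction is legitimate only because your endgame uses nothing but the symmetric relation $d_1d_2=abc$ (the two conditions in Definition~\ref{ort} are not symmetric in $f_1,f_2$), and Lemma~\ref{twocurves} requires irreducibility of the $f_i$, so Proposition~\ref{prop:mdscriterion2} should be invoked before it rather than after.
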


\begin{proof}
Proposition~\ref{prop:mdscriterion2} 
tells us that $f_1$ and 
$f_2$ are both irreducible.
According to Lemma~\ref{twocurves},
the zero loci of $f_1$ and $f_2$ intersect 
only at the point $\mathbf{1} \in \PP(a,b,c)$.
Thus, reordering $a,b,c$ suitably, 
we may assume
$$
f_1 \ = \ x^{p_1}-y^{p_2}z^{p_3},
\qquad\qquad
f_2 \ = \ y^{q_1}-x^{q_2}z^{q_3}.
$$
The homogeneity of the two binomials 
and the orthogonality condition
give us the following equations: 
$$
ap_1 \ = \ bp_2 + cp_3,
\qquad
bq_1 \ = \ aq_2 + cq_3,
\qquad
p_1q_1 \ = \ c.
$$
Substituting $c = p_1q_1$ in the 
first equation and using the coprimality
of $b$ and $c$ we obtain $p_2 = p_1p_2'$ 
with a $p_2'\in \ZZ_{\geq 1}$.
Similarly one shows that $q_2 = q_1q_2'$ 
with a $q_2'\in \ZZ_{\geq 1}$.
Consider the case $p_2q_2\neq 0$.
Then, from the first two equations, we 
deduce
$$
a \ = \ bp_2' + q_1p_3,
\qquad
b \ = \ aq_2' + p_1q_3.
$$
In particular $a\geq b\geq a$, so that $a=b$,
and thus $a$ is in the monoid generated 
by $b$ and $c$.
We now treat the case $p_2q_2 = 0$. 
We may assume $q_2=0$. 
Then from $bq_1 = cq_3$ and $p_1q_1 =c$
we deduce $b = p_1q_3$. 
From the coprimality
of $b$ and $c$ we deduce $p_1 = 1$ so that
$a$ lies in the monoid generated by 
$b$ and $c$.
\end{proof}

\begin{proof}[Proof of Theorem~\ref{isKstar}]
We prove ``(i)$\Rightarrow$(ii)''.
If $X$ has a non-trivial $\KK^*$-action,
then this action stabilizes the exceptional 
curve $E \subseteq X$ and thus $\PP(a,b,c)$ 
inherits a non-trivial $\KK^*$-action having 
$[1,1,1]$ as a fixed point.
According to~\cite{Co}, this means that 
$\Aut(\PP(a,b,c))$ must contain a root 
subgroup, i.e., there must by a monomial 
in two variables in $\KK[x,y,z]$ of 
degree $a$, $b$, or $c$.
This is only possible, if one of $a,b,c$ 
lies in the monoid generated by the 
remaining two.

We show that~(ii) implies~(i), (iii) and the supplement.
We may assume that $a = mb + nc$ holds with 
non-negative integers $m$ and~$n$.
Then the morphism
$$ 
\varphi \colon 
\PP(a,b,c) 
\ \to \ 
\PP(a,b,c),
\qquad\qquad
[z_1,z_2,z_3] 
\ \mapsto \ \  
[z_1- z_2^m z_3^n,z_2,z_3]
$$
sends $[1,1,1]$ to $[0,1,1]$. 
The blowing-up of $\PP(a,b,c)$ at 
$[0,1,1]$ obviously admits a 
$\KK^*$-action. 
To obtain the Cox ring, observe 
first 
$$
\PP(a,b,c)
\ \cong \ 
V(T_4 - T_1^c + T_2^b)
\ \subseteq \ 
\PP(b,c,a,bc).
$$
The Cox ring of $X(a,b,c)$ is now computed
via a toric ambient modification, 
see~\cite[Sec.~4.1.3]{ArDeHaLa}:
We blow up $\PP(b,c,a,bc)$ at $[1,1,0,0]$.
Then $X = X(a,b,c)$ is isomorphic to the strict 
transform $V(T_4 - T_1^c + T_2^b)$ and 
its Cox ring is as claimed.
Observe that the degree matrix $Q$ is 
given with respect to the basis $H,E$ 
of $\Cl(X) = \ZZ^2$.
The last column in the degree matrix is the 
class of $E$ and thus we see that the Rees 
multiplicities of the generators are as in 
the assertion.
In particular, we obtain~(iii).

We prove ``(iii)$\Rightarrow$(ii)''.
By assumption, $X$ is a Mori dream surface.
Take homogeneous non-associated prime generators 
$g_1 \in \mathcal{R}(X)_C$ and 
$g_2 \in \mathcal{R}(X)_D$ as in 
the proof of ``(i)$ \Rightarrow$(ii)''
of Proposition~\ref{prop:mdscriterion2}.
Then the effective cone of $X$ is 
generated by $C$ and $E$ and the 
semiample cone by $D$ and $H$. 
Moreover, $g_1$ and $g_2$ occur (up to scalars) 
in any system of homogeneous generators 
of $\mathcal{R}(X)$.
Thus, since $g_1$ and $g_2$ are of positive 
Rees multiplicity, the assumption says that 
they are of Rees multiplicity one.
Let $f_i \in S_{d_i}$ denote the polynomial
such that $g_i$ corresponds to  
$\pi^*(f)t^{-1} \in S[I]^{\rm sat}_{(d_i,-1)}$.
By primality of the $g_i$, the $f_i$ are 
of Rees multiplicity one.  
Moreover, they are non-associated primes 
forming an orthogonal pair, which means 
in particular $d_1d_2 = abc$.
According to Lemma~\ref{lem:orthpairdeg1},
we may assume that 
$f_1, f_2 \in I$ are binomials.
Then Lemma~\ref{reesdeg2} gives condition~(ii).
\end{proof}

\begin{remark}
Assume that we have $c = ma + nb$ with 
non-negative integers $m$ and~$n$.
Then the describing matrix $P$ of $X(a,b,c)$ 
in the sense of \cite{ArDeHaLa} is of the 
form 
$$ 
P
\ = \ 
\left[
\mbox{\footnotesize$
\begin{array}{rrrrr}
-c & b & 0 & 0 & 0
\\
-c & 0 & 1 & 1 & 0
\\
-m & -n & 0 & 1 & 1
\end{array}$}
\right].
$$
\end{remark}

\section{Orthogonal pairs II}
\label{sec:proofs2}

The setting and the notation are as in 
the preceding sections. 
The main result is Proposition~\ref{lem:reesdeg2nottwice},
which says that if in an orthogonal pair
$f_1,f_2$ one member is of Rees multiplicity two,
then the other is not.
We will often have to compute, more or less 
explicitly, the multiplicity of a curve 
in $\PP(a,b,c)$  at the point 
$\mathbf{1} \in \PP(a,b,c)$.
For this we use the following.

\begin{remark}
\label{rem:toruscoord}
Let $0 \ne f \in S = \KK[x,y,z]$. We compute 
the mutiplicity of $V(f)$ at the point
$\mathbf{1} \in \PP(a,b,c)$. 
Consider the presentation of $\PP(a,b,c)$ 
as a quotient of $\KK^3 \setminus \{0\}$ 
by the action of $\KK^*$ given as 
$t \cdot (x,y,z) = (t^ax,t^by,t^cz)$:
$$ 
\xymatrix{
{\TT^3}
\ar@{}[r]|{\subseteq\quad}
\ar[d]_\kappa
&
{\KK^3 \setminus \{0\}}
\ar[d]^\kappa
\\
{\TT^2}
\ar@{}[r]|{\subseteq\quad}
&
{\PP(a,b,c)}
}
$$
where $\TT^k = (\KK^*)^k$ denotes the standard $k$-torus.
The restriction $\kappa \colon \TT^3 \to \TT^2$ 
of the quotient map is a homomorphism of tori and 
thus given by monomials.
Let $f_0$ be any monomial of $f$. Then we have
$$ 
\frac{f}{f_0} 
\ =  \ 
\kappa^*(h)
$$
with a unique $h \in \KK[u^{\pm 1},v^{\pm 1}]$ on $\TT^2$. 
The Laurent polynomial $h$ generates the defining ideal 
of $V(f)$ on $\TT^2$. Thus, the multiplicity of 
$V(f)$ at $\mathbf{1} \in \PP(a,b,c)$ equals the 
multiplicity of $h$ at $(1,1) \in \TT^2$.
\end{remark}

\begin{lemma}
\label{lem:specialfmult}
Let $\alpha,\beta,\gamma \in \KK$ 
with $\alpha + \beta + \gamma = 0$
and $k,n_1,n_2,m_1,m_2 \in \ZZ_{\ge 0}$
such that we obtain a non-constant 
homogeneous polynomial
$$ 
f 
\ := \ 
\alpha z^k
+
\beta
x^{n_1}y^{n_2}
+
\gamma x^{m_1}y^{m_2}
\ \in \ 
\KK[x,y,z].
$$
Assume that $c k/l \not\in \langle a, b \rangle$
holds whenever $l \in \ZZ_{> 1}$ is a common divisor 
of $k,n_1,n_2$ or of $k,m_1,m_2$. 
Then the multiplicity of $V(f)$ at 
$\mathbf{1} \in \PP(a,b,c)$ is at most one.
\end{lemma}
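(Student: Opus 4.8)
The plan is to transport the computation to the torus $\TT^2\subseteq\PP(a,b,c)$ via Remark~\ref{rem:toruscoord} and to show that the resulting Laurent polynomial is smooth at the identity, which bounds the multiplicity by one. Since $f$ is non-constant and homogeneous, comparing the degrees $ck=an_1+bn_2=am_1+bm_2$ forces $k\ge1$. Assuming first $\alpha\ne0$, I divide by the monomial $z^k$ and obtain $f/z^k=\kappa^*(h)$ with
$$
h \ = \ \alpha + \beta\,\chi^{\chi_1} + \gamma\,\chi^{\chi_2} \ \in \ \KK[M],
\qquad
\chi_1=(n_1,n_2,-k),\quad \chi_2=(m_1,m_2,-k),
$$
where $M=\{(i,j,l):ai+bj+cl=0\}$ is the character lattice of $\TT^2$ and $\chi^m$ denotes the character attached to $m\in M$. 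By Remark~\ref{rem:toruscoord} the multiplicity of $V(f)$ at $\mathbf 1$ equals the order of vanishing of $h$ at the identity $o\in\TT^2$, at which every $\chi^m$ equals $1$; in particular $h(o)=\alpha+\beta+\gamma=0$, so $o\in V(h)$.

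The decisive step is to read off the linear part of $h$ at $o$. Choosing a basis of $M$ and expanding each $\chi^{\chi_i}$ to first order around $o$, the degree-one part of $h$ is the linear form associated with the covector $\beta\chi_1+\gamma\chi_2\in M_{\QQ}$; hence the order of vanishing is one precisely when this covector is nonzero. Here the structural feature that saves us is that $\chi_1$ and $\chi_2$ share the third coordinate $-k$: using $\beta+\gamma=-\alpha$, the third entry of $\beta\chi_1+\gamma\chi_2$ equals $-k(\beta+\gamma)=k\alpha$, which is nonzero because $\alpha\ne0$ and $k\ge1$. Thus the differential of $h$ at $o$ does not vanish, $o$ is a smooth point of $V(h)$, and the multiplicity is one. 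The remaining case $\alpha=0$ is handled symmetrically by dividing instead by $x^{n_1}y^{n_2}$: then $f=\beta(x^{n_1}y^{n_2}-x^{m_1}y^{m_2})$ and the covector becomes $\beta(\chi_1-\chi_2)=\beta(n_1-m_1,\,n_2-m_2,\,0)$, which is again nonzero, since otherwise the two monomials would coincide and $f=0$.

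Finally, I would make the role of the hypothesis explicit. If $l>1$ divides $\gcd(k,n_1,n_2)$ then $ck/l=a(n_1/l)+b(n_2/l)$ already lies in $\langle a,b\rangle$, and there is nothing to check once the relevant gcd is one; so the stated condition amounts precisely to $\gcd(k,n_1,n_2)=\gcd(k,m_1,m_2)=1$, that is, to primitivity in $M$ of the exponent vectors. In the degenerate situation $\chi_1=\chi_2$, where $f=\alpha(z^k-x^{n_1}y^{n_2})$ is a binomial, this primitivity guarantees in addition that the single branch of $V(f)$ through $\mathbf 1$ is a reduced irreducible curve, matching Remark~\ref{rem:reesmult}. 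I expect the main obstacle to be purely the bookkeeping in the character lattice—confirming that $\chi_1,\chi_2$ are linearly independent whenever the two $xy$-monomials differ, so that the tangent covector cannot degenerate—rather than any genuinely geometric difficulty.
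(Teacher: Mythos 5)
Your proof is correct, and it starts from the same reduction as the paper — pass to the torus via Remark~\ref{rem:toruscoord} and decide whether the linear part of $h$ at the identity vanishes — but the decisive step is genuinely different. The paper works in explicit coordinates $u,v$: from the vanishing of $\nabla h(1,1)$ it deduces that the two exponent vectors are proportional, writes both monomials as powers $g^{l_1},g^{l_2}$ of a primitive monomial, and only then invokes the arithmetic hypothesis ($ck/l\notin\langle a,b\rangle$ for common divisors $l>1$) to force $l_1=l_2=1$, so that $f$ degenerates to a binomial, a contradiction. You instead read off the differential intrinsically as the element $\beta\chi_1+\gamma\chi_2$ of $M\otimes_\ZZ\KK$ (write $M_\KK$ rather than $M_\QQ$, since $\beta,\gamma\in\KK$) and observe that its third coordinate equals $-k(\beta+\gamma)=k\alpha$; since $M$ embeds into $\KK^3$, this single nonzero coordinate already rules out first-order degeneration when $\alpha\ne 0$, and $\alpha=0$ is the trivial binomial case. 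The structural point your argument isolates is that every monomial of $f$ other than $z^k$ is free of $z$, so the shared third exponent $-k$ can never cancel. What this buys is worth recording: your proof never uses the hypothesis on $ck/l$, so it establishes the stronger statement that the conclusion holds for \emph{every} non-constant homogeneous $f=\alpha z^k+\beta x^{n_1}y^{n_2}+\gamma x^{m_1}y^{m_2}$ with $\alpha+\beta+\gamma=0$; indeed the same observation would shorten the paper's own proof, since proportional integer vectors with equal nonzero third coordinates must coincide. The paper's route keeps the divisibility bookkeeping explicit, which matches how the monoid conditions are actually deployed in the surrounding applications (Lemma~\ref{lem:2mons}, Proposition~\ref{lem:reesdeg2nottwice}), but for this lemma the hypothesis is redundant. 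Your closing reformulation of that hypothesis as $\gcd(k,n_1,n_2)=\gcd(k,m_1,m_2)=1$ is also correct.
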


\begin{proof}
If $f$ is a monomial, then $V(f)$ is of multiplicity 
zero at $\mathbf{1}$.
If $f$ is a binomial, then it is of multiplicity one 
at $\mathbf{1}$.
So, we may assume that $\alpha, \beta, \gamma$ 
all differ from zero. 
We follow Remark~\ref{rem:toruscoord}.
Consider the homomorphism of tori
$\kappa \colon \TT^3 \to \TT^2$
and let~$u,v$ be coordinates on $\TT^2$.
Then there are monomials $u^{p_1}v^{p_2}$ 
and $u^{q_1}v^{q_2}$ with
$$
\kappa^*(u^{p_1}v^{p_2}) 
\ = \ 
\frac{x^{n_1}y^{n_2}}{z^2},
\qquad\qquad
\kappa^*(u^{q_1}v^{q_2}) 
\ = \ 
\frac{x^{m_1}y^{m_2}}{z^2}.
$$
We have $f = z^k \kappa^*(h)$ for 
$h := \alpha + \beta u^{p_1}v^{p_2} + \gamma u^{q_1}v^{q_2}$
and the multiplicity of $f$ at~$\mathbf{1}$ equals
the multiplicity of $h$ at $(1,1)$.
Assume that latter is at least two.
Then $h$ and its derivatives 
$\partial h / \partial u$ and 
$\partial h / \partial v$ vanish 
simultaneously  at $(1,1)$.
This means
$$ 
\alpha + \beta + \gamma \ = \ 0,
\qquad
\beta p_1 + \gamma q_1 \ = \ 0,
\qquad
\beta p_2 + \gamma q_2 \ = \ 0,
$$
which implies that $(p_1,p_2)$ and $(q_1,q_2)$ are 
proportional and thus 
$u^{p_1}v^{p_2}$ and $u^{q_1}v^{q_2}$
are powers of a monomial $g = u^{w_1}v^{w_2}$
with coprime exponents $w_1,w_2$.
The pullback monomials are thus of the form
$$
\frac{x^{n_1}y^{n_2}}{z^k} \ = \ \kappa^*(g)^{l_1},
\qquad\qquad
\frac{x^{m_1}y^{m_2}}{z^k} \ = \ \kappa^*(g)^{l_2}.
$$
Observe that $l_1$ divides $k, n_1, n_2$.
Thus we have 
$c k/l_1 =  an_1/l_1 + bn_2/l_1 \in \langle a,b \rangle$.
By the assumption, this means $\l_1 = 1$.
Analogously, 
$l_2$ divides $k, m_1, m_2$ 
and we conclude $l_2 = 1$.
Thus, $f$ is a binomial and vanishes of 
order one at $\mathbf{1}$.
Consequently, $h$ cannot vanish of order 
at least two at $(1,1)$.
\end{proof}

\begin{lemma}
\label{lem:2mons}
Assume that none of $a,b,c$ lies in the 
monoid generated by the other two
and that $2c$ lies in the monoid generated 
by $a$ and $b$.
Then any $0 \ne f \in S_{2c}$ 
vanishes with multiplicity at most one 
at $\mathbf{1} \in \PP(a,b,c)$.
\end{lemma}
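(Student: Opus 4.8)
The plan is to show that, under the hypotheses, every nonzero $f \in S_{2c}$ has at most three monomials and is exactly of the shape treated in Lemma~\ref{lem:specialfmult} with $k=2$; the conclusion then follows directly from that lemma, so no new multiplicity computation is needed.

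First I would list all monomials $x^iy^jz^k$ of degree $2c$. Since $ck \le 2c$ forces $k \in \{0,1,2\}$, and $ai+bj = c$ has no nonnegative solution because $c$ does not lie in the monoid $\langle a,b\rangle$ generated by $a$ and $b$, there is no monomial with $k=1$; the only monomial with $k=2$ is $z^2$; and the monomials with $k=0$ are precisely the $x^iy^j$ with $ai+bj=2c$. Hence $f$ is a $\KK$-linear combination of $z^2$ and of the monomials $x^iy^j$ of degree $2c$.

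The key step is to bound the number of monomials $x^iy^j$ of degree $2c$. Fixing one solution $(i_0,j_0)$ of $ai+bj=2c$, every solution has the form $(i_0+bt,\,j_0-at)$ with $t \in \ZZ$, and nonnegativity confines $t$ to the interval $[-i_0/b,\ j_0/a]$ of length $(ai_0+bj_0)/(ab) = 2c/(ab)$. Because $c \notin \langle a,b\rangle$, the Frobenius bound gives $c \le ab-a-b < ab$, so this interval has length strictly less than $2$ and therefore contains at most two integers. Thus there are at most two monomials $x^iy^j$ of degree $2c$, and $f$ has at most three terms in total:
$$
f \ = \ \alpha z^2 + \beta x^{n_1}y^{n_2} + \gamma x^{m_1}y^{m_2}.
$$

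Finally, if $f(\mathbf{1}) = \alpha+\beta+\gamma \ne 0$ the multiplicity of $V(f)$ at $\mathbf{1}$ is zero; otherwise $\alpha+\beta+\gamma=0$ and I would invoke Lemma~\ref{lem:specialfmult} with $k=2$. Its hypothesis requires $2c/l \notin \langle a,b\rangle$ for every integer $l>1$ dividing $(2,n_1,n_2)$ or $(2,m_1,m_2)$; the only such $l$ is $l=2$, giving $2c/2 = c \notin \langle a,b\rangle$, which holds by assumption. Hence the multiplicity of $V(f)$ at $\mathbf{1}$ is at most one. The main obstacle is the lattice-point count of the third paragraph: once one extracts $c<ab$ from $c\notin\langle a,b\rangle$, the reduction to the previously established Lemma~\ref{lem:specialfmult} is immediate.
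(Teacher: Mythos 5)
Your proof is correct and follows essentially the same route as the paper's: reduce any $0 \ne f \in S_{2c}$ to the shape $\alpha z^2 + \beta x^{n_1}y^{n_2} + \gamma x^{m_1}y^{m_2}$ and invoke Lemma~\ref{lem:specialfmult} with $k=2$. The only difference is that where the paper cites \cite[4.4, p.~80]{diophantic} for the fact that at most two monomials of degree $2c$ involve only $x$ and $y$, you prove this directly by the lattice-point count on $ai+bj=2c$ together with the Frobenius bound $c \le ab-a-b$, which is a harmless, self-contained replacement.
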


\begin{proof}
A monomial $x^{n_1}y^{n_2}z^{n_3} \in S$ is of 
degree $2c$ if and only if it equals $z^2$
or is of the shape $x^{n_1}y^{n_2}$.
Indeed, we must have $n_3 \le 2$ and $n_3 = 1$ 
is impossible, because this means
$2c = c + an_1+bn_2$, contradicting 
$c \not\in \langle a, b \rangle$.
We obtain
$$
 g
 \ = \ 
 \alpha z^2
 +
 \beta
 x^{n_1}y^{n_2}
 +
 \gamma x^{m_1}y^{m_2}. 
 $$
with coefficients $\alpha, \beta,\gamma \in \KK$,
as~\cite[4.4, p.~80]{diophantic} tells us 
that there are at most two monomials of degree 
$2c$  only depending on $x$ and $y$.
If $\mathbf{1} \in V(f)$ holds, 
then we have 
$\alpha + \beta + \gamma = 0$
and Lemma~\ref{lem:specialfmult}
gives the assertion.
\end{proof}

\goodbreak

\begin{proposition}
\label{lem:reesdeg2nottwice}
Let $f_1 \in S_{d_1}$ and $f_2 \in S_{d_2}$ 
be an orthogonal pair.
If one of the~$f_i$ is of Rees multiplicity two,
then the other is not.
\end{proposition}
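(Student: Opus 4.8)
The plan is to argue by contradiction: assume both $f_1$ and $f_2$ have Rees multiplicity two and derive that the minimal member $f_1$ in fact has multiplicity at most one at $\mathbf{1}$. Write $C_i = d_iH - 2E$ for the strict transforms on $X$. Definition~\ref{ort} gives the degree relation $d_1d_2 = 4abc$ together with $C_1\cdot C_2 = 0$, while $C_1^2 \le 0$ is~\ref{ort}(i) and $C_2^2 \ge 0$ holds because $C_2$ lies on the nef ray. By Proposition~\ref{prop:mdscriterion2} both $f_i$ are irreducible, and by Lemma~\ref{twocurves} the curves $V(f_1)$ and $V(f_2)$ meet only at $\mathbf{1}$; since the local intersection number there equals $\mu_1\mu_2 = 4$, the minimum for two double points, the two tangent cones at $\mathbf{1}$ share no line. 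We may also assume that none of $a,b,c$ lies in the monoid generated by the other two: otherwise Theorem~\ref{isKstar} produces an orthogonal pair of Rees multiplicity one, which by the uniqueness in Remark~\ref{rem:orthpairdegs} would force our pair to have multiplicity one as well. In particular Lemmas~\ref{lem:specialfmult} and~\ref{lem:2mons} apply.

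The one numerical input I need is the bound $d_1 \le 2\sqrt{abc}$, which is immediate from $C_1^2 \le 0$ (and which covers the borderline $C_1^2 = 0$, i.e. $abc$ a perfect square, without a separate case). If $\dim_\KK S_{d_1} \le 2$, then $f_1$ is a monomial or a binomial and so has multiplicity at most one at $\mathbf{1}$, contradicting $\mu_1 = 2$; hence $\dim_\KK S_{d_1} \ge 3$ and $f_1$ genuinely involves at least three monomials.

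The heart of the argument is to pin down the support of $f_1$. In the spirit of Lemma~\ref{reesdeg2}, I would exploit that both $V(f_1)$ and $V(f_2)$ have a double point at $\mathbf{1}$ and meet nowhere else, so that their four tangent directions at $\mathbf{1}$ are distinct; combined with the degree relation $d_1d_2 = 4abc$, the bound $d_1 \le 2\sqrt{abc}$, and monomial counts in two coprime weights as used in Lemma~\ref{lem:2mons} and~\cite{diophantic}, this should force, after a suitable reordering of the weights, the special shape $f_1 = \alpha z^k + \beta x^{n_1}y^{n_2} + \gamma x^{m_1}y^{m_2}$ with $\alpha + \beta + \gamma = 0$. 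The hypothesis of Lemma~\ref{lem:specialfmult} then follows from the minimality of $d_1$: if some $l > 1$ divided $k, n_1, n_2$ (or $k, m_1, m_2$), then $z^{k/l} - x^{n_1/l}y^{n_2/l}$ would be a binomial of degree $d_1/l \le \sqrt{abc}$ of Rees multiplicity one with nonpositive self-intersection and degree strictly below $d_1$, contradicting~\ref{ort}(i). So no such $l$ exists, Lemma~\ref{lem:specialfmult} applies, and $V(f_1)$ has multiplicity at most one at $\mathbf{1}$ --- the required contradiction.

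I expect the support reduction in the previous paragraph to be the genuinely delicate step, and the place where the hypothesis $\mu_2 = 2$ is essential: for a multiplicity-one partner $f_2$ (as in Theorem~\ref{1112}) the curve $f_1$ need not take the special trinomial form, so it is precisely the presence of a \emph{second} double point meeting $V(f_1)$ only at $\mathbf{1}$ that rigidifies the support. Controlling supports in which $z$ occurs in more than one monomial, or which carry four or more monomials, and checking that the coprimality assumption of Lemma~\ref{lem:specialfmult} really holds, is the main work; the intersection-theoretic and minimality steps above are routine.
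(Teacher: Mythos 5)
Your preliminary reductions are correct and coincide with the paper's: the reduction to the case where none of $a,b,c$ lies in the monoid generated by the other two (via Theorem~\ref{isKstar} and Remark~\ref{rem:orthpairdegs}), irreducibility of both $f_i$ from Proposition~\ref{prop:mdscriterion2}, the single intersection point from Lemma~\ref{twocurves}, and the numerics $d_1d_2=4abc$, $d_1^2\le 4abc$. But everything after that is a plan, not a proof. The entire content of the proposition lies in the step you defer with ``this should force \dots the special shape'' and yourself flag as ``the main work'': showing that the support of one of the $f_i$ is so constrained that its multiplicity at $\mathbf{1}$ drops to one. The paper carries this out by a concrete case analysis that your sketch does not begin: first one shows each $V(f_i)$ contains a toric fixed point (otherwise homogeneity forces $d_i=abcn$, and orthogonality then gives $a,b,c\le 4$, contradicting the monoid assumption); since the curves meet only at $\mathbf{1}$, no fixed point lies on both, leaving three cases for how the fixed points distribute. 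In each case homogeneity plus orthogonality pin the degrees (e.g.\ $c=4$, $d_1=4b$ in Case~1; $d_1=4c$, $d_2=ab$ in Case~2; $d_1=bc$, $d_2=4a$ in Case~3), minimality of the degrees in Definition~\ref{ort} excludes relations like $2c\in\langle a,b\rangle$ or $2a\in\langle b,c\rangle$, Frobenius-number bounds and the counting results of~\cite{diophantic} and~\cite{contdisc} then limit the support to three monomials, and a multiplicity computation gives the contradiction. None of this is routine, and it is exactly what is missing from your proposal.

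There is also a concrete error in your intended reduction: it is not true that the minimal-degree member $f_1$ can always be brought, by reordering the variables, to the shape $\alpha z^k+\beta x^{n_1}y^{n_2}+\gamma x^{m_1}y^{m_2}$ of Lemma~\ref{lem:specialfmult}. In the paper's Case~1 the forced form is
$$
f_1 \ = \ \beta_1 y^4 + \gamma_1 z^b + \delta\, x y z^{\frac{3b-a}{4}},
$$
in which $z$ occurs in two monomials and no variable occurs as a pure power in exactly one monomial and nowhere else; no permutation of $x,y,z$ puts this into the lemma's shape, and the paper instead concludes by a direct computation in torus coordinates (the trinomial becomes $\beta_1+\gamma_1 u+\delta v$, which is smooth at $(1,1)$). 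Moreover, in Case~3 the trinomial analysis is applied to $f_2$, of degree $4a$, not to the minimal member $f_1$ of degree $bc$. So a proof that funnels every case through Lemma~\ref{lem:specialfmult} applied to $f_1$ cannot be completed as stated; the case distinctions, and the separate arguments they require, are unavoidable.
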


\begin{proof}
If one of $a,b,c$ lies in the monoid generated 
by the other two, then Theorem~\ref{isKstar}
and Remark~\ref{rem:orthpairdegs} give the 
assertion.
So, we only have to consider the case, 
where none of $a,b,c$ lies in the 
monoid generated by the other two.

Assume that both members $f_1,f_2$ of the orthogonal 
pair are of Rees multiplicity two.
Then, by Proposition~\ref{prop:mdscriterion2}, each 
$V(f_i) \subseteq \PP(a,b,c)$ is an irreducible 
curve and the strict transforms $C_i \subseteq X$ 
satisfy $C_1 \cdot C_2 = 0$.
Thus, Lemma~\ref{twocurves} says that $\mathbf{1}$ is 
the only intersection point of $V(f_1)$ and $V(f_2)$.

In a first step we show that each $V(f_i)$ contains 
at least one of the toric fixed points $[1,0,0]$,
$[0,1,0]$ and $[0,0,1]$.
Assume that one $V(f_i)$ does not. 
Then $f_i$ is of the shape
$$ 
f_i 
\ = \
\alpha x^{p_1} + \beta y^{p_2} + \gamma z^{p_3}
+ f_i',  
$$   
where $\alpha, \beta, \gamma \in \KK^*$ 
and the monomials of $f_i' \in S_{d_i}$ 
are all in two or three variables.
Since $f_i$ is homogeneous of degree $d_i$,
we obtain
$$ 
d_i \ = \ p_1a \ = \ p_2b \ = \ p_3c.
$$ 
As $a,b,c$ are pairwise coprime, 
$d_i = abcn$ holds with $n \in \ZZ_{\ge 1}$.
The orthogonality condition $d_1d_2 = 4abc$ 
gives $nd_j = 4$ for the $j \ne i$.
This implies $a,b,c \le 4$.
Then one of $a,b,c$ lies in the monoid
generated by the other two; a contradiction.

Thus, we saw that each of the curves $V(f_i)$ 
contains at least one toric fixed point 
and no toric fixed point is contained in 
both of them. 
After suitably reordering $a,b,c$, we are 
left with the following three cases.

\medskip
\noindent
\emph{Case 1.} 
Each of the curves $V(f_1)$ and $V(f_2)$ contains 
exactly one toric fixed point, namely
$[1,0,0]$ and $[0,1,0]$ respectively.
Then $f_1$ and $f_2$ are of the shape
$$
f_1 \ =\  \beta_1 y^{p_1} +  \gamma_1 z^{p_2} + f_1',
\qquad\qquad
f_2 \ =\  \alpha_1 x^{q_1} + \gamma_2 z^{q_2} + f_2',
$$
where $\alpha_i, \beta_i, \gamma_i \in \KK^*$ 
holds and the monomials of the $f_i' \in S_{d_i}$ 
are all in two or three variables.
The homogeneity of the $f_i$ implies
$$
d_1 = bp_1 = cp_2,
\qquad\qquad
d_2 = aq_1 = cq_2.
$$
Pairwise coprimality of $a,b,c$ gives
$d_1 = bcn$ and $d_2=acm$ with $n,m \in \ZZ_{\ge 1}$. 
The orthogonality condition $d_1d_2 = 4abc$ implies
$cnm = 4$.
We conclude $c=4$ and $n=m=1$, because 
$c \le 2$ would imply $a \in \langle b,c \rangle$ 
or $b \in \langle a,c \rangle$.
Thus, $d_1 = 4b$ holds.
Now we use Condition~\ref{ort}~(i):
$$
d_1^2 \le 4abc
\implies
16b^2 \le 16ab
\implies 
b \le a
\implies 
b < a,
$$
where the last conclusion is due to 
$a \not\in \langle b,c \rangle$.
On the other hand, 
$a \not\in \langle b,c \rangle$
implies that $a$ is less or equal 
to the Frobenius number of the monoid 
$\langle b,c \rangle$.
This means
$$
a
\ \le \ 
(c-1)(b-1)-1 
\ = \ 
(4-1)(b-1)-1 
\ = \ 
3b-4.
$$
Moreover, $a-b$ and $2b$ are even but not 
divisible by $c = 4$. Consequently, $3b-a$  
is divisible by~$4$.
We claim
$$ 
f_1 \ = \ \beta_1 y^4 +  \gamma_1 z^b + \delta xyz^{\frac{3b-a}{4}},
\qquad
\beta_1, \gamma_1, \delta \in \KK^*.
$$
Note that we need at least three terms, 
because binomials are of Rees multiplicity one. 
The task is to show that there are no further 
monomials of degree $4b$ than the ones above.
Each monomial $x^ny^mz^l$ of degree $4b$ gives an
equation
$$
an + bm + 4l \ = \  4b, 
\qquad
n,m \in \ZZ_{\ge 0}.
$$
Clearly, $m \le 4$ holds.
Because of $a > b$, we have $n \le 3$. 
As $an+bm$ is divisible by~$4$, 
the only possibilities for $(n,m)$ are
$(0,4)$, $(0,0)$ and $(1,1)$.
Having verified the special shape for $f_1$,
we can compute the multiplicity of $V(f_1)$
according to Remark~\ref{rem:toruscoord}.
The quotient map is given on the tori as 
$$ 
\kappa \colon \TT^3 
\ \to \ 
\TT^2,
\qquad\qquad
(x,y,z) 
\ \mapsto \ 
\left(
\frac{z^b}{y^4}, 
\frac{x z^{\frac{3b-a}{4}}}{y^4} 
\right).
$$
We have 
$f_1 = y^4 \kappa^*(h)$ with 
$h := \beta_1 + \gamma_1 u + \delta v$. 
The polynomial $h$ has multiplicity one at 
$(1,1)$; a contradiction.

\medskip
\noindent
\emph{Case 2.} 
The curve $V(f_1)$ contains $[1,0,0]$ and $[0,1,0]$
and $V(f_2)$ contains $[0,0,1]$.
Then $f_1$ and $f_2$ are of the shape
$$
f_1 \ = \ \gamma_1 z^{p} + f_1',
\qquad\qquad
f_2 \ = \  \alpha_2 x^{q_1} + \beta_2 y^{q_2} + f_2',
$$
where $\alpha_i, \beta_i, \gamma_i \in \KK^*$,
and the polynomials $f_i' \in S_{d_i}$ have 
only monomials in two or three variables.
By homogeneity of the $f_i$
we have  
$$
d_1= cp,
\qquad\qquad
d_2 = aq_1 = bq_2 = abn,
$$
where $n$ is a positive integer.
The orthogonality condition $d_1d_2 = 4abc$ 
provides us with $np=4$.
The case $n=p=2$ is impossible:
we would have $2c \in \bangle{a,b}$ and,
by Lemma~\ref{lem:2mons}, the multiplicity 
of~$f_1$ at $\mathbf{1} \in \PP(a,b,c)$
would be one.
We end up with $p=4$ and $n=1$.
This means $d_1=4c$ and $d_2=ab$.
Condition~\ref{ort}~(i) gives
$$
d_1^2 \le 4abc
\implies
16c^2 \le 4abc
\implies 
c \le \frac{ab}{4}.
$$
This implies $2c \not\in \langle a,b \rangle$,
because otherwise we find a binomial
$g = z^2 - x^ny^m$ of degree $2c$ and 
Rees multiplicity~$1$ which satisfies
Condition~\ref{ort}~(i), contradicting
the minimality of the degree of~$f_1$.
We determine $f_1$ more explicitly.
Each monomial $x^ny^mz^l$ of degree $4c$ gives an
equation
$$
an + bm + lc \ = \  4c, 
\qquad
n,m,l \in \ZZ_{\ge 0}.
$$
Here, $l = 2,3$ are excluded because of 
$2c \not\in \langle a,b \rangle$ and
$c \not\in \langle a,b \rangle$.
Thus, we have $l \le 1$.
If $4c < ab$ holds,
then we can apply~\cite[4.4, p.~80]{diophantic}
and obtain that there is at most one monomial 
of the form $zx^{n_1}y^{n_2}$ 
and at most one of the form $x^{m_1}y^{m_2}$ 
in degree~$4c$.
Thus, we have 
$$
f_1 
\ = \  
\alpha z^4 + \beta zx^{n_1}y^{n_2}  + \gamma x^{m_1}y^{m_2}
$$
and Lemma~\ref{lem:specialfmult} tells us 
that $V(f_1)$ is multiplicity one at $\mathbf{1}$;
a contradiction.
We are left with discussing the case $4c = ab$. 
By coprimality of $a$ and $b$, we obtain
$a = 4a'$ or $b = 4b'$. Thus, $c = a'b$ and 
$c = b'a$, both contradicting 
$c \not\in \langle a,b \rangle$. 
Thus, Case~2 cannot occur.

\medskip
\noindent
\emph{Case 3.} 
The curve $V(f_1)$ contains $[1,0,0]$  
and $V(f_2)$ contains $[0,1,0]$ and $[0,0,1]$.%
Then $f_1$ and $f_2$ are of the shape
$$
f_1 \ = \ \beta_1 y^{p_1} + \gamma_1 z^{p_2} + f_1',
\qquad\qquad
f_2 \ = \  \alpha_2 x^{q}  + f_2',
$$
where $\alpha_i, \beta_i, \gamma_i \in \KK^*$,
and the polynomials $f_i' \in S_{d_i}$ have 
only monomials in two or three variables.
By homogeneity of the $f_i$
we have  
$$
d_1 \ = \ bp_1 \ = \ cp_2 \ = \ bcn,
\qquad\qquad
d_2 \ = \ aq,
$$
where $n$ is a positive integer. The orthogonality 
condition $d_1d_2 = 4abc$ gives $nq = 4$.
We obtain $q=4$ and $n=1$, 
because $q = 1$ is excluded 
by $a \not\in \langle b,c \rangle$ and $q=2$ 
is impossible due to Lemma~\ref{lem:2mons}.
Thus, we have $d_1 = bc$ and $d_2 = 4a$.
Condition~\ref{ort}~(i) gives
$$
d_1^2 \le 4abc
\implies
b^2c^2 \le 4abc
\implies 
bc \le 4a.
$$
We have $2a \not\in \langle b,c \rangle$,
because otherwise, there is a binomial $f_2' = x^2 - y^{n_1}z^{n_2}$ 
of degree $d_2' = 2a$ and Rees multiplicity $\mu_2' = 1$
satisfying the orthogonality condition; a contradiction
to the minimality of the degree of $f_2$.   
In particular, $2a$ is less than the Frobenius number
of $\langle b, c\rangle$ which means
$$ 
2a 
\ \le \ 
(b-1)(c-1) -1 
\ = \ 
bc - b -c.
$$
Combining this with the previous estimate, we obtain
$b+c \le 2a$.
We determine $f_2$ explicitly. 
Any monomial $x^ly^mz^n$ of degree 
$d_2 = 4a$ gives rise to an equation
$$ 
la + mb + nc \ =  \ 4a.
$$
We search for solutions with $l \le 3$. 
The cases $l=3,2$ are excluded because of 
$a \not\in \langle b, c \rangle$ and 
$2a \not\in \langle b, c \rangle$.
Thus, we look for pairs $m,n \in \ZZ_{\ge 0}$ 
satisfying one of the equations
$$ 
mb + nc \ =  \ 4a,
\qquad\qquad
mb + nc \ =  \ 3a.
$$
Consider the case $b \not\in \{2,3,4\}$.
Then $b$ does not divide $ka$ for
$k=1,2,3,4$.
Fix positive integers $u,v$ with 
$ub-vc = 1$.
Then~\cite[Corollary 1.6]{contdisc} says
that the number $\xi_{b,c}(ka)$ of pairs 
$(m,n) \in \ZZ^2_{\ge 0}$ satisfying 
$mb+nc = ka$ is given as 
$$
\xi_{b,c}(ka) 
\  = \  
\Big\lfloor\frac{uka}{c}\Big\rfloor 
- 
\Big\lfloor\frac{vka}{b}\Big\rfloor,
\qquad
\text{for }
k = 1,2,3,4.
$$
As just seen, we have $\xi_{b,c}(a) = 0$
and $\xi_{b,c}(2a) = 0$. 
The first equality implies that the two 
numbers $ua/c$ and $va/b$ lie in some open 
interval $]s,\, s+1[$, where $s \in \ZZ$.
The second equality implies that both
numbers even lie either in  
$]s,\, s+1/2[$ or in $]s+1/2,\, s+1[$. 
We obtain 
$$ 
\xi_{b,c}(3a) \ \le \ 1
\qquad\qquad
\xi_{b,c}(4a) \ \le \ 1.
$$
In other words, there are at most three 
monomials in $S_{4a}$, namely $x^4$, $xy^{n_1}z^{n_2}$ 
and $y^{m_1}z^{m_2}$. 
Lemma~\ref{lem:specialfmult} says that $V(f_2)$ is 
of multiplicity at most one at $\mathbf{1}$; 
a contradiction. 
Analogously, the case $c \not\in \{2,3,4\}$
is excluded.
Thus, we are left with $b,c \in \{2,3,4\}$.
But this impossible due to 
$a \not\in \langle b,c \rangle$ and 
$2a \not\in \langle b,c \rangle$.
\end{proof}

\section{Proof of Theorem~\ref{1112}}
\label{sec:proofs3}

We will use the following general criterion 
for verifying Cox ring generators.
Consider an arbitrary 
Mori dream space $X_1$ and the blow-up $X_2$ 
of an irreducible subvariety $C \subseteq X_1$ 
contained in the smooth locus of~$X_1$.
We will denote by $I \subseteq R_1:=\Cox(X_1)$ 
the homogeneous ideal corresponding to $C \subseteq X_1$
and by $J \subseteq R_1$ the irrelevant ideal.
The morphism $X_2 \to X_1$ defines a canonical pull back 
map $R_1 \to R_2 := \Cox(X_2)$ of Cox rings.
We ask if for a given choice of homogeneous 
generators $f_1, \ldots, f_k \in I$ for $I$,
the canonical section $t \in R_2$ of the exceptional 
divisor $E \subseteq X_2$ together with 
$f_it^{-m_i}$, where $i = 1, \ldots, k$ and $m_i$
is the Rees multiplicity, 
generate the Cox ring $R_2$ of $X_2$ 
as an $R_1$-algebra.

\begin{proposition}
\label{prop:ismds}
In the above situation, 
let $g_1,\ldots,g_m$ be homogeneous
generators of the $\KK$-algebra $R_1$
and let $f$ be the product over all 
$g_j$ not belonging to $I$.
Set 
$$
B_0 
\ := \ 
\{t^{m_i}s_i - f_i; \ i = 1, \ldots,  k\}
\ \subseteq \ 
R_1[s_1,\ldots,s_k,t].
$$ 
Then $R_2$ is generated as a $\KK$-algebra 
by $t$,
the $f_it^{-m_i}$, where $i = 1, \ldots, k$,
and the~$g_j$ not belonging to $I$,
provided that there is a finite set 
$B_0 \subseteq B \subseteq \langle B_0 \rangle : \langle t\rangle^\infty$
with
$$
\dim(R_1) 
\ = \
\dim (\langle B\cup\{t\}\rangle)
\ > \
 \dim  (\langle B\cup\{t,f\}\rangle).
$$
Moreover, in this case, the Cox ring $R_2$ of 
$X_2$ is isomorphic as a $\Cl(X_2)$-graded algebra 
to 
$$
R_1[s_1,\dots,s_k,t] 
/ 
\bigl( \langle B \rangle : \langle t \rangle^{\infty} \bigr).
$$
\end{proposition}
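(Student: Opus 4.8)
The plan is to exhibit an explicit isomorphism between $R_2$ and the candidate algebra $A := R_1[s_1,\dots,s_k,t]/(\langle B\rangle:\langle t\rangle^\infty)$. Throughout I use that, exactly as in the situation of Section~\ref{sec:orthpoly}, the Cox ring $R_2$ of the blow-up is the saturated Rees algebra $\bigoplus_{\mu}(I^{-\mu}:J^\infty)t^\mu$; in particular $R_1\subseteq R_2$ sits in Rees multiplicity $0$, the canonical section $t$ is a prime element with $(R_2)_t=R_1[t^{\pm1}]$, and $R_2$ is a normal domain (even a UFD) with $\dim R_2=\dim R_1+1$. Consider the $R_1$-algebra homomorphism
$$
\Phi\colon R_1[s_1,\dots,s_k,t]\lra R_2,
\qquad
s_i\mapsto f_it^{-m_i},
\quad
t\mapsto t,
$$
whose image is the subalgebra $R_2':=R_1[f_1t^{-m_1},\dots,f_kt^{-m_k},t]$. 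Once $R_2'=R_2$ is established, the asserted generating set follows at once: every $g_j\in I$ lies in $\langle f_1,\dots,f_k\rangle$, and since $R_1$ has only constant units a routine induction gives $R_1=\KK[\{g_j\notin I\},f_1,\dots,f_k]$, so each $f_i=t^{m_i}(f_it^{-m_i})$ and hence all of $R_1$ is recovered from $t$, the $f_it^{-m_i}$ and the $g_j\notin I$.

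Next I would set up the morphism $\overline{\Phi}$. Since $\Phi(t^{m_i}s_i-f_i)=f_i-f_i=0$, we have $B_0\subseteq\ker\Phi$; as $R_2$ is a domain and $\Phi(t)=t\neq0$, the prime $\ker\Phi$ is saturated with respect to $t$, whence $\langle B\rangle:\langle t\rangle^\infty\subseteq\ker\Phi$. Observe also that $B_0\subseteq B\subseteq\langle B_0\rangle:\langle t\rangle^\infty$ forces $\langle B\rangle:\langle t\rangle^\infty=\langle B_0\rangle:\langle t\rangle^\infty$, so $A$ is in fact independent of the chosen $B$ and $\Phi$ factors through a homomorphism $\overline{\Phi}\colon A\to R_2$. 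Inverting $t$, the relations $t^{m_i}s_i-f_i$ let one solve $s_i=f_it^{-m_i}$, so $A_t\cong R_1[t^{\pm1}]=(R_2)_t$ and $\overline{\Phi}_t$ is an isomorphism; this also shows that the scheme-theoretic closure of $\Spec(R_1[t^{\pm1}])$ is integral, so $A$ is a domain of dimension $\dim R_1+1$ with $t$ a nonzerodivisor, that $\mathrm{Frac}(A)=\mathrm{Frac}(R_2)$, and that $\overline{\Phi}$ is injective and birational. It remains to prove surjectivity, i.e. $R_2'=R_2$.

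For surjectivity I would argue by normality and codimension, and this is where the two dimension hypotheses enter. Because $R_2$ is a finitely generated $\KK$-algebra with $(R_2)_t=A_t$, we have $t^eR_2\subseteq A$ for some $e$, so $R_2\subseteq t^{-e}A$ is a finite $A$-module and $\Supp_A(R_2/A)\subseteq V(t)\subseteq\Spec A$, a set of dimension at most $\dim A-1=\dim R_1$. The equality $\dim\langle B\cup\{t\}\rangle=\dim R_1$ records that $V(t)\cap\Spec A$ is a divisor of the expected dimension $\dim R_1$. The strict inequality $\dim\langle B\cup\{t,f\}\rangle<\dim R_1$ is used together with the fact that $\overline{\Phi}$ is also an isomorphism over $D(f)$: inverting $f$ inverts all the $g_j\notin I$, making $J$ the unit ideal on $D(f)$ (here one uses $V(J)\subseteq V(f)$), so the saturation $(\,\cdot\,):J^\infty$ becomes vacuous there. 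Hence $\Supp_A(R_2/A)\subseteq V(t)\cap V(f)$, which by the strict inequality has dimension $<\dim R_1$, i.e. codimension $\geq2$ in $\Spec A$. Thus $A$ and $R_2$ agree in codimension $\le1$, and since $R_2$ is normal, finite and birational over $A$, this forces $\overline{\Phi}$ to be an isomorphism, $A=R_2$. Combined with the first paragraph, $R_2$ is generated as claimed and $R_2\cong R_1[s_1,\dots,s_k,t]/(\langle B\rangle:\langle t\rangle^\infty)$ as $\Cl(X_2)$-graded algebras.

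I expect the last step to be the main obstacle. The manipulations with $\Phi$, the saturation, and the localization at $t$ are essentially formal, and the first dimension equality is little more than bookkeeping that fixes $\dim A=\dim R_1+1$ and that $t$ cuts out a divisor. The genuine work is in the surjectivity argument: verifying that inverting $f$ trivialises the $J$-saturation (so that $\overline{\Phi}$ is an isomorphism over $D(f)$ and the failure locus is confined to $V(t)\cap V(f)$), and then promoting codimension-one agreement to an actual equality $A=R_2$ via the $S_2$/reflexivity properties of the normal ring $R_2$. This is exactly the point that can alternatively be organised through the toric ambient modification framework of the cited \cite[Sec.~4.1.3]{ArDeHaLa}, whose hypotheses are precisely encoded by the displayed dimension conditions.
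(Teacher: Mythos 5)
Your setup (the homomorphism $\Phi$, the identification $\ker\Phi=\langle B\rangle:\langle t\rangle^\infty=\langle B_0\rangle:\langle t\rangle^\infty$, injectivity and birationality after inverting $t$) matches the paper's proof, which introduces the same map $\psi$ and identifies its kernel with this saturation. The divergence is in the surjectivity step, and there your argument has two genuine gaps. First, you assume that $R_2$ is a finitely generated $\KK$-algebra in order to get $t^eR_2\subseteq A$ and hence finiteness of $R_2$ as an $A$-module. But finite generation of $R_2=\Cox(X_2)$ is not a hypothesis here --- it is part of the conclusion. This proposition is precisely the engine used (via Algorithm~\ref{ismds} and in the proof of Theorem~\ref{1112}) to certify that blow-ups are Mori dream spaces, so assuming $R_2$ finitely generated is circular; for a general blow-up $R_2$ need not be finitely generated at all, and then no single power $t^e$ satisfies $t^eR_2\subseteq A$. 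What survives without finiteness is only that $R_2/A$ is $t$-power and $J$-power torsion, hence supported in $V(t)\cap V(JA)$, which is not enough for your argument.

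Second, even granting finiteness, your concluding principle --- ``$A$ and $R_2$ agree in codimension $\le 1$, and $R_2$ is normal, finite and birational over $A$, hence $A=R_2$'' --- is false as stated. Take $A=\KK+(x,y)^2\,\KK[x,y]\subseteq R_2=\KK[x,y]$: here $R_2$ is normal, finite and birational over $A$, and $\Supp_A(R_2/A)=\{(x,y)\}$ has codimension two, yet $A\neq R_2$. A codimension-$\ge 2$ support yields a contradiction only against Serre's condition $S_2$ for the \emph{smaller} ring $A$, and nothing in the hypotheses tells you that $A=R_1[s_1,\dots,s_k,t]/(\langle B\rangle:\langle t\rangle^\infty)$ is $S_2$ or normal; proving that is essentially as hard as the statement itself. (Your parenthetical claim $V(J)\subseteq V(f)$ is also left unproved, though it can be justified using that the irrelevant ideal is monomial in the $g_j$ and that the cone over $C$ meets the semistable locus.) The paper does not attempt this descent: after the same bookkeeping that replaces the saturated kernel by the computable ideal $\langle B\rangle$ in the dimension chain, it invokes the criterion of \cite[Algorithm 5.4]{hkl}, whose correctness proof supplies surjectivity of $\psi$ through the machinery of saturated Rees algebras and toric ambient modifications rather than through an $S_2$-descent for the unknown ring $A$. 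So the step you yourself flagged as the main obstacle is exactly where the proof breaks.
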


\begin{proof}
Recall that the Cox ring $R_2$ of $X_2$ is
the saturated Rees algebra $R_1[I]^{\rm sat}$.
As before, let $m_i$ be the Rees multiplicity of 
$f_i$ for $i= 1, \ldots, k$. 
The kernel of the $\Cl(X_2)$-graded homomorphism
$$
\psi 
\colon 
R_1[s_1,\dots,s_k,t] \ \to \ R_1[I]^{\rm sat},
\qquad
 s_i \ \mapsto \ f_it^{-m_i},
\qquad
 t \ \mapsto\ t
$$
is the saturation $I_2 :=  I_2' : \langle t \rangle^\infty$,
where we set $I_2' := \langle  B \rangle$.
Observe that the dimension of $R_1$ 
equals that of $I_2 +\langle t\rangle$.
Thus, by our assumption, we have 
$$ 
\dim(I_2 +\langle t\rangle)
\, = \, 
\dim(R_1) 
\, = \,
\dim(I_2' + \langle t \rangle)
\, > \, 
\dim(I_2' + \langle t, f \rangle) 
\, \ge \, 
\dim(I_2 + \langle t, f \rangle).
$$
Consequently, we meet the condition 
of~\cite[Algorithm 5.4]{hkl}
which guarantees that the homomorphism~$\psi$ 
is surjective.
The assertion follows.
\end{proof}

\begin{proof}[Proof of Theorem~\ref{1112}]
We show that~(i) implies~(ii).
First note that the Cox ring of
$X$ finitely generated.
Indeed, $\mathcal{R}(X)$ is 
the saturated Rees algebra $S[I]^{\rm sat}$
which, under the assumption~(i), is generated
by $t^{-1}$, the Cox ring 
generators $x,y,z$ of $\PP(a,b,c)$, the 
elements $g_i t^{-1}$,
where the $g_i$ generate the ideal $I:J^\infty$,
and the $h_j t^{-2}$, where the $h_j$ 
generate the ideal $I^2:J^\infty$.
Thus, Proposition~\ref{prop:mdscriterion2} 
provides us with an orthogonal pair 
$f_1,f_2$, where $f_i \in S_{d_i}$ is 
of Rees multiplicity $\mu_i$.
Remark~\ref{rem:orthpairdegs} says that
$(d_1,-\mu_1)$ and $(d_2,-\mu_2)$ 
occur in the set of $\Cl(X)$-degrees 
of any system of generators of the 
Cox ring $\mathcal{R}(X)$.
Thus, by assumption, we have $\mu_i \le 2$.
Proposition~\ref{lem:reesdeg2nottwice}
yields that $\mu_i = 2$ holds at most 
once.

For both $f_i$, their degree $d_i$ is positive
and thus also their Rees multiplicity $\mu_i$
is positive.
Since we assume none of $a,b,c$ to lie in the 
monoid generated by the other two,
the case $\mu_1 = \mu_2$ is excluded by 
Lemmas~\ref{lem:orthpairdeg1} and~\ref{reesdeg2}.
We now consider the case $\mu_1 = 1$ and $\mu_2 = 2$.
Then we may assume 
$$ 
f_1 
\ = \ 
x^{p_1}-y^{p_2}z^{p_3},
\qquad
\qquad
 f_2
\ = \ 
\alpha y^{q_1} + \beta z^{q_2} + f_2',
$$
where $\alpha, \beta \in \KK^*$ and $f_2' \in S_{d_2}$ 
has only monomials in two or three variables.
Indeed, Lemma~\ref{lem:orthpairdeg1} say that we may 
assume $f_1$ to be a binomial.
By Proposition~\ref{prop:mdscriterion2}, the binomial
$f_1$ is prime, and thus we may assume it to be 
of the displayed shape.
In particular, the points $[0,1,0]$ and $[0,0,1]$ 
are contained in $V(f_1)$.
Lemma~\ref{twocurves} tells us that none of these 
two points lies in $V(f_2)$ and thus, $f_2$ must be 
of the above shape.
Homogeneity of $f_1$, $f_2$ and the orthogonality 
condition~\ref{ort}~(ii) lead to the equations
$$
ap_1 \ = \ bp_2 + cp_3,
\qquad
bq_1 \ = \ cq_2,
\qquad
p_1q_1 \ = \ 2c.
$$
Since $b$ and $c$ are coprime, 
the second equation shows that $q_1 = lc$
holds with $l \in \ZZ_{\ge 1}$. 
Substituting this in the last equation
gives $lp_1 = 2$.
Because of $a \not\in \langle b,c \rangle$,
we have $p_1 \ne 1$ and thus obtain
$p_1=2$ and $l=1$. 
Consequently, $q_1=c$ and $q_2=b$ hold. 
With $n := p_2$ and $m := p_3$, the first equation thus becomes
$$
 2a \ = \ nb + mc.
$$
We now describe the polynomial $f_2'$ in more detail.
First, we determine the monomials
$x^ky^pz^q$ of degree $d_2 = bc$.
This means to look at the equation
$ka+bp+cq = bc$, which implies 
$$
b(kn+2p) + c(km+2q)
\ = \ 
2bc.
$$
In particular, $kn+2p = rc$ holds for some 
integer $r \geq 1$. 
Substituting this in the displayed
equation, we obtain $km+2q = (2-r)b$.
This implies $r \leq 1$ and thus $r=1$.
Thus, we arrive at
$$
p \ = \ \frac{c-kn}{2},
\qquad\qquad
q \ = \ \frac{b-km}{2}.
$$
In particular, we see that $k$ must be odd,
as $b$ and $c$ are coprime. 
Up to now, we are able to express the 
possible monomials of degree $d_2 = bc$ 
in terms of $k,m,n$ and $b,c$.
We are going to apply Remark~\ref{rem:toruscoord}.
As a homomorphism of tori we take
$$ 
\kappa \colon \TT^3 \ \to \ \TT^2,
\qquad\qquad
(x,y,z) 
\ \mapsto \ 
\left(
\frac{y^c}{z^b}, 
\frac{xy^{\frac{c-n}{2}}z^{\frac{b-m}{2}}}{z^b} 
\right).
$$
Then, with the coordinates $u,v$ on $\TT^2$
and $l := (k-1)/2$, we 
can write the general monomial of degree 
$d_2 = bc$ as
$$ 
x^ky^{\frac{c-kn}{2}}z^{\frac{b-km}{2}}
\ = \ 
z^b \kappa^* \!\! \left(\frac{v^k}{u^{\frac{k-1}{2}}}\right)
\ = \ 
z^b \kappa^* \!\! \left(\frac{v^{2l+1}}{u^l}\right).
$$
Consequently, with suitable coefficients $\gamma_l \in \KK$, 
we can write $f_2 =  z^b h$ with a Laurent polynomial 
$$ 
h 
\ = \ 
\alpha 
+ 
\beta u
+
\sum_{l=0}^s \gamma_l \frac{v^{2l+1}}{u^l}.
$$
The fact that $f_2$ is of Rees multiplicity two implies 
that $h$ as well as its first order partial 
derivatives $\partial h / \partial u$
and $\partial h / \partial v$ vanish at $(1,1)$.
This leads to the conditions
$$ 
\alpha + \beta + \gamma_0 + \ldots + \gamma_s \ = \ 0,
$$
$$
\beta - \gamma_0 - \ldots - (s+1) \gamma_s   \ = \ 0,
$$
$$
\gamma_0 + 3 \gamma_1 + \ldots + (2s+1) \gamma_s \ = \ 0.
$$
In particular, we see that the polynomial $f_2$ must have
at least four terms. In fact, we can choose it to be
$$ 
f_2
\ = \  
y^c 
+ z^b 
- 3xy^{\frac{c-n}{2}}z^\frac{b-m}{2}
+ x^3y^{\frac{c-3n}{2}}z^\frac{b-3m}{2}.
$$
As all exponents are non-negative, we see that in 
the equation $2a = bm + mc$ we have 
$b \ge 3m$ and $c \ge 3n$.
Thus, we verified the conditions of~(ii)
in the case $\mu_1 = 1$ and $\mu_2=2$.
If $\mu_2 = 1$ and $\mu_1=2$ holds, then
we may proceed exactly the same way; 
observe that we only made use of the 
orthogonality condition~\ref{ort}~(ii).

We show that~(ii) implies the supplement.
First we claim that the ideal $I \subseteq S$ 
is generated by the binomials
$$
f_1 \ = \ x^2 - y^nz^m,
\qquad
f_2 \ := \ xz^\frac{b-m}{2}-y^\frac{c+n}{2},
$$
$$
f_3 
\ := \ 
xy^\frac{c-n}{2}-z^\frac{b+m}{2}
\ = \ x^{-1}(y^\frac{c-n}{2}f_1-z^nf_2).
$$
Indeed, from~\cite[Lemma~7.6]{MiSt} we infer that
$I$ equals the saturation
$\langle f_1,f_2\rangle : \langle xyz\rangle^\infty$.
Now, $f_3$ lies in the saturation and 
$\langle f_1,f_2,f_3\rangle$ is prime,
which gives the claim.
Observe that we have 
$$
f_4 
\ := \ 
xy^\frac{c-3n}{2}z^\frac{b-3m}{2}f_1-y^\frac{c-n}{2}f_2-z^\frac{b-m}{2}f_3
\ \in \ 
(I^2 : J^\infty)\setminus I^2.
$$
We want to show that the Cox ring of $X$
is generated by the canonical 
section $t$ of the exceptional divisor,
the pull back sections $x,y,z$,
the sections $s_i := f_it^{-1}$ for $i = 1,2,3$ 
and $s_4 := f_4t^{-2}$.
This is equivalent to saying that
the Cox ring of $X$ is isomorphic to
$\KK[x,y,z,s_1,\dots,s_4,t]/I_2$,
where 
$$
 I_2 
\ := \ 
\langle s_1t-f_1,\ s_2t-f_2,s_3t-f_3,\ s_4t^2-f_4\rangle : t^\infty.
$$
The localization 
$(I_2)_t \subseteq \KK[x,y,z,s_1,\dots,s_4,t]_t$
is a prime ideal of dimension four 
and thus $I_2$ is a prime ideal of dimension 
four.   
Moreover, the ideal $I_2$ contains the
ideal $I_2'$ generated by the 
following polynomials
\begin{gather*}
  f_1 - s_1t,\qquad
  f_2 - s_2t,\qquad
  f_3 - s_3t,\\
  y^\frac{c-n}{2}s_1-z^ms_2-xs_3,\qquad
  z^\frac{b-m}{2}s_1-xs_2-y^ns_3,\\
  xy^\frac{c-3n}{2}z^\frac{b-3m}{2}s_1-y^\frac{c-n}{2}s_2-z^\frac{b-m}{2}s_3-s_4t,\\
  s_3^2 + y^\frac{c-3n}{2}s_1s_2-z^ms_4,\\
  s_2^2 + z^\frac{b-3m}{2}s_1s_3-ys_4,\\
  y^\frac{c-3n}{2}z^\frac{b-3m}{2}s_1^2 - s_2s_3-xs_4.
\end{gather*}
Let $I_2'' \subseteq \KK[x,y,z,s_1,\ldots,s_4]$
be the ideal generated by the polynomials 
obtained from the above ones by setting $t := 0$.
Then the first three generators of $I_2''$
are $f_1,f_2,f_3$. 
We take a look at the zero set $V(I_2'') \subseteq \KK^7$.
First consider the area $W_0 \subseteq V(I_2'')$ cut out by
$xyz = 0$. 
By the nature of $f_1,f_3,f_3$, each of $x,y,z$ 
vanishes identically on $V(I_2'')$ and we see that 
$W_0 = V(x,y,z,s_2,s_3)$ is of dimension two.
Now consider the set of points $W_1 \subseteq V(I_2'')$
satisfying $xyz \ne 0$.
We have a finite surjection
$$ 
\KK^* \times \KK^4
\ \to \ 
V(f_1,f_2,f_3),
\qquad 
(\xi,s_1,s_2,s_3,s_4) 
\ \mapsto \ 
(\xi^a,\xi^b,\xi^c,s_1,s_2,s_3,s_4).
$$
The image contains $W_1$ and the pullback 
of the generators number 4,5 and 6 of 
$I_2''$ are multiples of 
$$
\xi^\frac{bc-bn-2a}{2}s_1-\xi^{cm-a}s_2 - s_3
\ \in \ 
\KK[\xi^{\pm 1}, s_1,s_2,s_3,s_4].
$$
Now, we eliminate $s_3$ by means of this 
relation and see that turns the pullbacks 
of the remaining three generators of $I_2''$ 
are multiples of a common polynomial,
depending on~$s_4$.
We conclude that $W_1$ is of dimension three.
Altogether, we verified that 
$I_2' + \langle t \rangle$ has a three-dimensional 
zero set
and $I_2' + \langle t, xyz \rangle$ a two-dimensional
one.
Thus, we can apply Proposition~\ref{prop:ismds}
to see that the Cox ring of $X$ is as claimed.

To conclude the whole proof, it suffices to show that 
the supplement implies~(i). But this is 
obvious.
\end{proof}

\begin{remark}
\label{mist}
Observe that in the proof of Theorem~\ref{1112},
the fourth and fifth generators of the ideal
$I_2'$ come from the following syzygies 
of the lattice ideal $\<f_1,f_2,f_3\>$ as found by 
the methods from~\cite[Chap.~9]{MiSt}:

\begin{center}
\tiny
\begin{tikzpicture}[scale=1,rotate=18]
 \coordinate (x1) at (-1.7,0);
 \coordinate (x2) at (1.7,0);
 \coordinate (x3) at (0,1.75);
 \coordinate (x4) at (0,-1.75);

 \fill[color = black!30] (x1) -- (x3) -- (x2) -- (x4) -- cycle;
 
 \draw[line width=1.15pt] (x1) -- (x2) -- (x3) -- (x1);
 \draw[line width=1.15pt] (x1) -- (x4) -- (x2);
 
 \fill (x1) circle (1.75pt);
 \fill (x2) circle (1.75pt);
 \fill (x3) circle (1.75pt);
 \fill (x4) circle (1.75pt);
 
 % vertices
 \draw (x1) node[anchor=east]{$(0,0,0)$};
 \draw (x2) node[anchor=west]{$(-1,\tfrac{c+n}{2},\tfrac{m-b}{2})$};
 \draw (x3) node[anchor=south]{$(-2,n,m)$};
 \draw (x4) node[anchor=north]{$(1,\tfrac{c-n}{2},-\tfrac{b+m}{2})$};
 
  % edges
 \coordinate (x12) at ($(x1)!.5!(x2)$);
 \coordinate (x13) at ($(x1)!.5!(x3)$);
 \coordinate (x14) at ($(x1)!.5!(x4)$);
 \coordinate (x23) at ($(x3)!.5!(x2)$);
 \coordinate (x24) at ($(x4)!.5!(x2)$);
  
 % 2-faces:
 \coordinate (x123) at (0,0.7);
 \coordinate (x124) at (0,-0.7);
 
  % arrows
  \draw [black,shorten <=0.01cm,shorten >=0.01cm, dashed, color = black, line width = .9pt] (x12) -- (x123);
 \draw [black,shorten <=0.01cm,shorten >=0.01cm, dashed, color = black, line width = .9pt] (x13) -- (x123);
 \draw [black,shorten <=0.01cm,shorten >=0.01cm, dashed, color = black, line width = .9pt] (x23) -- (x123);
 
  \draw [black,shorten <=0.01cm,shorten >=0.01cm, dashed, color = black, line width = .9pt] (x12) -- (x124);
 \draw [black,shorten <=0.01cm,shorten >=0.01cm, dashed, color = black, line width = .9pt] (x14) -- (x124);
 \draw [black,shorten <=0.01cm,shorten >=0.01cm, dashed, color = black, line width = .9pt] (x24) -- (x124);
%  \draw [black,shorten <=0.3cm,shorten >=0.3cm, ->, line width = 1.5pt, color = black!60] (x12) -- (x123);
%  \draw [black,shorten <=0.1cm,shorten >=0.7cm, ->, line width = 1.5pt, color = black!60] (x13) -- (x123);
%  \draw [black,shorten <=0.1cm,shorten >=0.7cm, ->, line width = 1.5pt, color = black!60] (x23) -- (x123);
%  
%   \draw [black,shorten <=0.3cm,shorten >=0.3cm, ->, line width = 1.5pt, color = black!60] (x12) -- (x124);
%  \draw [black,shorten <=0.1cm,shorten >=0.85cm, ->, line width = 1.5pt, color = black!60] (x14) -- (x124);
%  \draw [black,shorten <=0.1cm,shorten >=0.85cm, ->, line width = 1.5pt, color = black!60] (x24) -- (x124);

%  \fill[color=black!30] (x12) ellipse (14pt and 5pt);
 \draw (x12) node[fill=white,opacity=.75]{\phantom{$(0,\tfrac{c+n}{2},0)$}};
 \draw (x12) node{$(0,\tfrac{c+n}{2},0)$};
 \draw (x13) node[anchor=east]{$(0,n,m)$};
 \draw (x14) node[anchor=east]{$(1,\frac{c-n}{2},0)$};
 \draw (x23) node[anchor=west]{$(-1,\tfrac{c+n}{2},m)$};
 \draw (x24) node[anchor=west]{$(1,\tfrac{c+n}{2},\tfrac{m-b}{2})$};

 \draw (x123) node[fill=white,opacity=.75]{\phantom{$(0,\tfrac{c+n}{2},m)$}};
 \draw (x124) node[fill=white,opacity=.75]{\phantom{$(1,\tfrac{c+n}{2},0)$}};
 \draw (x123) node{$(0,\tfrac{c+n}{2},m)$};
 \draw (x124) node{$(1,\tfrac{c+n}{2},0)$};
\end{tikzpicture}
\end{center}
\end{remark}

\begin{corollary}
\label{cor:3bc}
Consider a triple $(3,b,c)$ such that none 
of the entries lies in the monoid generated 
by the other two.
Then the Cox ring of $X(3,b,c)$ is as
in Theorem~\ref{1112}.
\end{corollary}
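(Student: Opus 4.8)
The plan is to reduce the corollary directly to Theorem~\ref{1112} by checking that every admissible triple $(3,b,c)$ satisfies condition~(ii) of that theorem; once this is established, Theorem~\ref{1112} immediately yields both Mori dreamness and the displayed Cox ring. Note that the standing hypothesis of Theorem~\ref{1112}, namely that none of the three entries lies in the monoid generated by the other two, is exactly the hypothesis of the corollary, so nothing beyond condition~(ii) needs verifying.

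After possibly interchanging $b$ and $c$, I would assume $b < c$. The first step is to extract the numerical consequences of the hypothesis $c \notin \langle 3, b\rangle$. Since $3$ and $b$ are coprime, the Frobenius number of $\langle 3,b\rangle$ is $2b-3$, so $c \notin \langle 3,b\rangle$ forces $c \le 2b-3$; together with $b<c$ this already gives $b \ge 4$ and in particular $c < 2b$. The second step is to read off the relevant congruence: because $c < 2b$, any representation $c = 3\lambda + b\mu$ with $\lambda,\mu \in \ZZ_{\ge 0}$ must have $\mu \in \{0,1\}$, and $\mu = 0$ is impossible as $3 \nmid c$; hence membership $c \in \langle 3,b\rangle$ would be equivalent to $c \equiv b \pmod 3$. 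Therefore $c \notin \langle 3,b\rangle$ is equivalent to $c \not\equiv b \pmod 3$, and since $b,c$ are both coprime to $3$ this means $b+c \equiv 0 \pmod 3$, equivalently $3 \mid (2b-c)$.

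With this congruence in hand I would exhibit the representation demanded by condition~(ii). Set $m := 1$ and $n := (2b-c)/3$; the divisibility just established makes $n$ an integer, and $c \le 2b-3$ makes $n \ge 1$. Then $2b = 3n + cm$, so reordering the triple so that $b$ plays the role of ``$a$'', the value $3$ plays the role of ``$b$'', and $c$ plays the role of ``$c$'' in Theorem~\ref{1112}, the two inequalities to be checked become $3 \ge 3m$, which is the equality $3 \ge 3$, and $c \ge 3n$, which reads $c \ge 2b-c$ and holds because $c > b$. This verifies condition~(ii) for $(3,b,c)$, and invoking Theorem~\ref{1112} completes the argument.

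The only genuinely delicate point is the combinatorial matching in the last step, which is where I expect the main obstacle to lie: one must double the \emph{smaller} of $b,c$. Doubling the larger entry $c$ fails, since the inequality attached to the entry $3$ then forces the coefficient of $b$ to equal $1$, whereupon the remaining inequality reduces to $b \ge c$, a contradiction. The conceptual heart of the proof is the observation of the second step, that for $b < c < 2b$ the non-membership $c \notin \langle 3,b\rangle$ is \emph{precisely} the congruence $b+c \equiv 0 \pmod 3$ needed to make $n = (2b-c)/3$ an integer; once this equivalence is recognized, everything else is routine bookkeeping.
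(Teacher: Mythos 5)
Your proposal is correct and follows essentially the same route as the paper's proof: reduce to condition~(ii) of Theorem~\ref{1112}, assume $b<c$, use the Frobenius number of $\langle 3,b\rangle$ to get $c<2b$, deduce $b+c\equiv 0\pmod 3$ from $c\notin\langle 3,b\rangle$, and then write $2b=3n+c$ with $n=(2b-c)/3$, $m=1$, checking $c\ge 3n$. Your write-up is in fact slightly more careful than the paper's (you use the correct Frobenius number $2b-3$, where the paper writes $2(b-1)+1$, and you note explicitly that the congruence is equivalent, not merely necessary).
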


\begin{proof}
It suffices to show that  $(3,b,c)$
satisfies condition~\ref{1112}~(ii).
To see this, observe that if $b < c$ then
also $c < 2b$ holds; otherwise,
$c$ would be in the semigroup
$\langle 3,b\rangle$ as it is bigger that
the Frobenius number $2(b-1)+1$
of the semigroup. Also observe that 
the equation $b+c\equiv 0\pmod 3$ 
must hold, since otherwise $c$ would
belong to the semigroup
$\langle 3,b\rangle$. We deduce that
there exists a positive integer $n$ such 
that $2b = 3n + c$. Moreover, from
$c+3n = 2b < 2c$, we deduce $c>3n$.
\end{proof}

%%%%%%%%%%%%%%%%%%%%%%%%%%%%%%%%%%%%

\section{Algorithms and applications}
\label{sec:algos}

Our first algorithm applies to blow ups of 
arbitrary Mori dream spaces.
We work in the setting of 
Proposition~\ref{prop:ismds}.
Based on the criterion given there,
we are able to avoid the (involved)
computation of saturations performed
in the related~\cite[Algorithm~5.6]{hkl}.

\begin{algorithm}[Verify generators]
\label{ismds}
{\em Input:} 
homogeneous generators $g_1, \ldots, g_m$ for 
the Cox ring $R_1$ of a Mori dream space~$X_1$
and homogeneous generators $f_1,\dots,f_k \in R_1$ 
of the ideal $I$ of an irreducible subvariety 
$C \subseteq X_1$ contained in the smooth locus.
\begin{itemize}
\item
For each $f_i$, compute the maximal $m_i \in \ZZ_{\ge 0}$ 
with $f_i \in I^{m_i} : J^\infty$.
\item
Let $f$ be the product of all the generators $g_i$ 
which do not vanish along $C$.
\item
Set 
$\mathfrak B 
:= 
\{t^{m_i}s_i-f_i;  i = 1, \ldots, k\}
\subseteq 
R_1[s_1,\ldots,s_k,t]$.
\item
Repeat
\begin{itemize}
\item
if 
$\mathfrak B' := \langle\mathfrak B\rangle : t^\infty 
\setminus \langle \mathfrak B \rangle$ 
is nonempty, enlarge $\mathfrak B$ by an element
of $\mathfrak B'$. 
\item
if 
$
\dim(R_1) 
= 
\dim(\langle\mathfrak B\cup \{t\}\rangle)
$
and 
$\dim(\langle\mathfrak B\cup \{t\}\rangle) 
>
\dim(\langle\mathfrak B\cup \{t,f\}\rangle)$
then return {\em true}.
\end{itemize}
until 
$
\langle\mathfrak B\rangle:t^\infty
= 
\langle\mathfrak B\rangle
$.
\item
Return {\em false}.
\end{itemize}
{\em Output: } true is returned if and only if
the Cox ring $R_2$ of the blow-up $X_2$ of $X_1$
along $C$ is generated by $t$ and  
$f_1t^{-m_1},\ldots,f_kt^{-m_k}$ as an $R_1$-algebra.
\end{algorithm}

\begin{proof}
If the algorithm returns ``true'' that 
Proposition~\ref{prop:ismds} guarantees
that $R_2$ is generated by $t$ and
$f_1^{-m_1},\ldots,f_k^{-m_k}$ as an 
$R_1$-algebra.
Conversely, assume thast $R_2$ is generated 
by $t$ and $f_1t^{-m_1},\ldots,f_kt^{-m_k}$ as an 
$R_1$-algebra.
Then the list of all $g_j,f_it^{-m_i},t$ 
comprises a system of pairwise 
$\Cl(X)$-coprime generators for $R_2$ and thus, 
the dimension conditions are fulfilled 
if $\langle \mathfrak B \rangle$ equals 
the defining ideal of $R_2$ which in turn 
is given as 
$\langle\mathfrak B\rangle:t^\infty$.
Consequently, the algorithm returns true.
\end{proof}

\begin{remark}
In the fifth line of Algorithm~\ref{ismds},
as in Remark~\ref{mist},
elements of $\mathfrak B'$ can be obtained by 
determining syzygies among (products of) the~$f_i$.
\end{remark}

The next algorithm implements 
Proposition~\ref{prop:mdscriterion2} and 
provides a Mori dreamness test in our 
concrete setting, i.e., the blow-up 
$X=X(a,b,c)$ of the point 
$[1,1,1] \in \PP(a,b,c)$.
As before, $I \subseteq S$ is the 
ideal of $[1,1,1]$ in the Cox ring $S=\KK[x,y,z]$ of
$\PP(a,b,c)$.

\begin{algorithm}[Mori dreamness test]
\label{algo:ismds}
{\em Input:} 
pairwise coprime positive integers $(a,b,c)$.
\begin{itemize}
\item
Compute a system $\mathfrak B$ of 
homogeneous generators of the ideal 
$I \subseteq S=\KK[x,y,z]$ of 
$[1,1,1] \in \PP(a,b,c)$.
\item
For $m=2,3,\ldots$ do
\begin{itemize}
\item
Compute the normal form of a basis of
$A_m := I^m : J^\infty$ with respect
to $A_{<m} := A_1A_{m-1}+\dots
+A_{\lfloor\frac{m}{2}\rfloor}A_{\lceil\frac{m}{2}\rceil}$,
select the elements of minimal degree and add
them to $\mathfrak B$.
\item
If $\mathfrak B$ contains an orthogonal 
pair $f_1,f_2 \in S$ as in Definition~\ref{ort},
then return {\em true}.
\end{itemize}
\end{itemize}
\emph{Output: } \emph{true}; 
this is returned if and only if the algorithm terminates 
and in this case, $X(a,b,c)$ is a Mori dream surface.
\end{algorithm}

\begin{proof}
If the algorithm terminates,
then it returns ``true'' and thus 
there is an orthogonal pair in $S$.
Proposition~\ref{prop:mdscriterion2} 
then yields that $X(a,b,c)$ is a Mori dream 
surface.
If $X(a,b,c)$ is a Mori dream surface,
then $\mathfrak B$ will give rise to 
a system of homogeneous generators for the 
Cox ring at some point and 
Remark~\ref{rem:orthpairdegs}
ensures that there is an orthogonal 
pair in $\mathfrak{B}$.
\end{proof}

Finally, we discuss the applications to the 
investigation of the Mori dream space property 
for $\overline{M}_{0,n}$.
Recall the following from~\cite{CaTe} 
and~\cite[Theorem~4.1]{gk}.

\begin{method}[Castravet/Tevelev]
\label{method:CastravetTevelev}
Given $n\in \ZZ_{\geq 6}$, let 
$N'\subseteq N$ be a saturated sublattice of $N:=\ZZ^{n-3}$
of rank $n-5$ generated by subsets of
$M:=\{\pm p;\ p\in \{0,1\}^{n-3}\}\subseteq N$ such that
for the quotient map
$\pi\colon N\to N/N'$,
there are $v_1,v_2,v_3\in M$ with
$$
\left\<\pi(v_1),\pi(v_2),\pi(v_3)\right\>\ =\ N/N'.
$$
Further assume that there are pairwise coprime positive integers $a,b,c$
with $av_1+bv_2+cv_3 \equiv 0 \pmod{N'}$.
If the blow up $X(a,b,c)$ of $\PP(a,b,c)$ in the point 
$[1,1,1]$ is not a Mori dream space, then also 
$\overline M_{0,n}$ is not a Mori dream space.
\end{method}

\begin{proof}[Proof of Addendum~\ref{thm:add}]
There are proper surjective morphisms
$\overline M_{0,n} \to \overline M_{0,n-1}$.
Consequently, if $\overline M_{0,n}$ is 
a Mori dream space, then also 
$\overline M_{0,n-1}$ is one.  
Thus, it suffices to show that 
$\overline M_{0,10}$ is not a Mori dream space.

The defining fan of the Losev-Manin space $\overline{L}_{10}$ 
lives in $N := \ZZ^7$ and its rays are the cones 
generated by the vectors having either 
all their coordinates in $\{0,1\}$ or in 
$\{0,-1\}$.
Consider the linear map $\ZZ^7 \to \ZZ^2$ 
given by the matrix
$$
P
\ := \ 
\left[
\begin{array}{rrrrrrr}
1 & 0 & 1 & -2 & -1 & 1 & 0
\\
0 & 1 & -1 & -3 & -2 & 2 & 1
 \end{array}
\right]
$$
A $\ZZ$-basis for the kernel $N' \subseteq N$ of $P$ 
is given by the following five primitive generators 
of the fan of $\overline{L}_{10}$:  
$$ 
e_1+e_2+e_4+e_6,
\quad
e_1+e_2+e_5+e_7,
\quad
-(e_1+e_4+e_6+e_7),
$$
$$
e_5+e_6,
\qquad
-(e_2+e_3+e_4+e_6+e_7).
$$
Moreover, the primitive generators 
$-(e_4+e_5)$, $-(e_1+e_3+e_6)$
and $e_1+e_3+e_4+e_5$ are mapped 
to the columns of 
$$
\left[
\begin{array}{rrr}
3 &-3 &-1
\\
5 &-1 &-6
 \end{array}
\right]
$$
which in turn generate the fan of $\PP(17,13,12)$.
In particular, we have a rational toric morphism from 
$\overline{L}_{10}$  to $\PP(17,13,12)$.
By~\cite[Theorem 1.5]{gk}, the surface
$X(17,13,12)$ is not Mori dream. 
Thus, Method~\ref{method:CastravetTevelev} gives the 
assertion.
\end{proof}

\begin{remark}
Method~\ref{method:CastravetTevelev} fails for 
$\overline M_{0,n}$, where $n= 7,8,9$.
In these cases, for all possible projections 
$\pi$ and the possible associated $X(a,b,c)$,
Algorithm~\ref{algo:ismds} is feasible and shows 
that the $X(a,b,c)$ are Mori dream surfaces.
So, it remains open whether $\overline M_{0,n}$
is a Mori dream space for $n= 7,8,9$.
\end{remark}

\begin{bibdiv}
\begin{biblist}

\bib{ArDeHaLa}{book}{
   author={Arzhantsev, Ivan},
   author={Derenthal, Ulrich},
   author={Hausen, J{\"u}rgen},
   author={Laface, Antonio},
   title={Cox rings},
   series={Cambridge Studies in Advanced Mathematics},
   volume={144},
   publisher={Cambridge University Press, Cambridge},
   date={2015},
   pages={viii+530},
   isbn={978-1-107-02462-5},
   review={\MR{3307753}},
}

\bib{contdisc}{book}{
   author={Beck, Matthias},
   author={Robins, Sinai},
   title={Computing the continuous discretely},
   series={Undergraduate Texts in Mathematics},
   edition={2},
   note={Integer-point enumeration in polyhedra;
   With illustrations by David Austin},
   publisher={Springer, New York},
   date={2015},
   pages={xx+285},
   isbn={978-1-4939-2968-9},
   isbn={978-1-4939-2969-6},
   review={\MR{3410115}},
   doi={10.1007/978-1-4939-2969-6},
}

\bib{Ca}{article}{
   author={Castravet, Ana-Maria},
   title={The Cox ring of $\overline M_{0,6}$},
   journal={Trans. Amer. Math. Soc.},
   volume={361},
   date={2009},
   number={7},
   pages={3851--3878},
   issn={0002-9947},
   review={\MR{2491903}},
   doi={10.1090/S0002-9947-09-04641-8},
}

\bib{CaTe}{article}{
   author={Castravet, Ana-Maria},
   author={Tevelev, Jenia},
   title={$\overline{M}_{0,n}$ is not a Mori dream space},
   journal={Duke Math. J.},
   volume={164},
   date={2015},
   number={8},
   pages={1641--1667},
   issn={0012-7094},
   review={\MR{3352043}},
   doi={10.1215/00127094-3119846},
}

\bib{chmr}{article}{
   author={Ciliberto, Ciro},
   author={Harbourne, Brian},
   author={Miranda, Rick},
   author={Ro{\'e}, Joaquim},
   title={Variations of Nagata's conjecture},
   conference={
      title={A celebration of algebraic geometry},
   },
   book={
      series={Clay Math. Proc.},
      volume={18},
      publisher={Amer. Math. Soc., Providence, RI},
   },
   date={2013},
   pages={185--203},
   review={\MR{3114941}},
}

\bib{Co}{article}{
   author={Cox, David A.},
   title={The homogeneous coordinate ring of a toric variety},
   journal={J. Algebraic Geom.},
   volume={4},
   date={1995},
   number={1},
   pages={17--50},
   issn={1056-3911},
   review={\MR{1299003}},
}

\bib{ck}{article}{
   author={Cutkosky, Steven Dale},
   author={Kurano, Kazuhiko},
   title={Asymptotic regularity of powers of ideals of points in a weighted
   projective plane},
   journal={Kyoto J. Math.},
   volume={51},
   date={2011},
   number={1},
   pages={25--45},
   issn={2156-2261},
   review={\MR{2784746}},
   doi={10.1215/0023608X-2010-019},
}

\bib{gk}{article}{
   author={Gonz{\'a}lez, Jos{\'e} Luis},
   author={Karu, Kalle},
   title={Some non-finitely generated Cox rings},
   journal={Compos. Math.},
   volume={152},
   date={2016},
   number={5},
   pages={984--996},
   issn={0010-437X},
   review={\MR{3505645}},
   doi={10.1112/S0010437X15007745},
}

\bib{GoNiWa}{article}{
   author={Goto, Shiro},
   author={Nishida, Koji},
   author={Watanabe, Keiichi},
   title={Non-Cohen-Macaulay symbolic blow-ups for space monomial curves and
   counterexamples to Cowsik's question},
   journal={Proc. Amer. Math. Soc.},
   volume={120},
   date={1994},
   number={2},
   pages={383--392},
   issn={0002-9939},
   review={\MR{1163334}},
   doi={10.2307/2159873},
}

\bib{hkl}{article}{
   author={Hausen, J{\"u}rgen},
   author={Keicher, Simon},
   author={Laface, Antonio},
   title={Computing Cox rings},
   journal={Math. Comp.},
   volume={85},
   date={2016},
   number={297},
   pages={467--502},
   issn={0025-5718},
   review={\MR{3404458}},
   doi={10.1090/mcom/2989},
}

\bib{HuKe}{article}{
   author={Hu, Yi},
   author={Keel, Sean},
   title={Mori dream spaces and GIT},
   note={Dedicated to William Fulton on the occasion of his 60th birthday},
   journal={Michigan Math. J.},
   volume={48},
   date={2000},
   pages={331--348},
   issn={0026-2285},
   review={\MR{1786494}},
   doi={10.1307/mmj/1030132722},
}

\bib{MiSt}{book}{
   author={Miller, Ezra},
   author={Sturmfels, Bernd},
   title={Combinatorial commutative algebra},
   series={Graduate Texts in Mathematics},
   volume={227},
   publisher={Springer-Verlag, New York},
   date={2005},
   pages={xiv+417},
   isbn={0-387-22356-8},
   review={\MR{2110098}},
}

\bib{diophantic}{book}{
   author={Ram{\'{\i}}rez Alfons{\'{\i}}n, J. L.},
   title={The Diophantine Frobenius problem},
   series={Oxford Lecture Series in Mathematics and its Applications},
   volume={30},
   publisher={Oxford University Press, Oxford},
   date={2005},
   pages={xvi+243},
   isbn={978-0-19-856820-9},
   isbn={0-19-856820-7},
   review={\MR{2260521}},
   doi={10.1093/acprof:oso/9780198568209.001.0001},
}

\bib{za}{article}{
   author={Zariski, Oscar},
   title={The theorem of Riemann-Roch for high multiples of an effective
   divisor on an algebraic surface},
   journal={Ann. of Math. (2)},
   volume={76},
   date={1962},
   pages={560--615},
   issn={0003-486X},
   review={\MR{0141668}},
}

\end{biblist}
\end{bibdiv}
\end{document}